\newcommand{\N}{\mathbb{N}}
\newcommand{\F}{\mathbb{F}}
\newcommand{\Lf}{\mathfrak{f}}
\newcommand{\Lg}{\mathfrak{g}}\newcommand{\Lh}{\mathfrak{h}}
\newcommand{\Li}{\mathfrak{i}}
\newcommand{\Ll}{\mathfrak{l}}\newcommand{\Lm}{\mathfrak{m}}\newcommand{\Ln}{\mathfrak{n}}
\newcommand{\Lp}{\mathfrak{p}}
\newcommand{\Lr}{\mathfrak{r}}
\newcommand{\CB}{\mathcal{B}}
\newcommand{\CQ}{\mathcal {Q}}
\newcommand{\CS}{\mathcal {S}}\newcommand{\CU}{\mathcal {U}}
\newcommand{\rad}{\operatorname{rad}}
\newcommand{\map}[3]{ #1 : #2 \longrightarrow #3 }
\newtheorem{theorem}{Theorem}[subsection]
\newtheorem{proposition}[theorem]{Proposition}
\newtheorem{corollary}[theorem]{Corollary}
\newtheorem*{corollary*}{Corollary}
\newtheorem{lemma}[theorem]{Lemma}
\newtheorem*{theorem*}{Theorem}
\newtheorem*{proposition*}{Proposition}
\newtheorem*{lemma*}{Lemma}
\newtheorem*{problem*}{Problem}
\newtheorem*{observation*}{Observation}
\theoremstyle{definition} \newtheorem{remark}[theorem]{Remark}
\theoremstyle{definition} \newtheorem{example}[theorem]{Example}
\theoremstyle{definition}\newtheorem{definition}[theorem]{Definition}
\theoremstyle{definition}
\title{Representing Lie algebras\\using approximations with nilpotent ideals}
\author{Wolfgang Alexander Moens\thanks{This research was supported by the Austrian Science Foundation FWF (Grant J3371-N25).}}
\date{2015}
\begin{document}

\maketitle

\abstract{We prove a refinement of Ado's theorem for Lie algebras over an algebraically-closed field of characteristic zero. We first define what it means for a Lie algebra $\Lg$ to be approximated with a nilpotent ideal, and we then use such an approximation to construct a faithful representation of $\Lg$. The better the approximation, the smaller the degree of the representation will be. We obtain, in particular, explicit and combinatorial upper bounds for the minimal degree of a faithful $\Lg$-representation. The proofs use the universal enveloping algebra of Poincar\'e-Birkhoff-Witt and the almost-algebraic hulls of Auslander and Brezin.}

\tableofcontents

%\abstract{We prove a refinement of Ado's theorem for Lie algebras over an algebraically closed field of characteristic zero: a $d$-dimensional nilpotent Lie algebra with a nilpotent ideal of class $\varepsilon_1$ and codimension $\varepsilon_2$ admits a faithful representation of degree $ \binom{d + \varepsilon_1}{\varepsilon_1} \cdot \binom{d + \varepsilon_2}{\varepsilon_2}$. We then apply the theory of almost-algebraic hulls to generalise this result to the representation of Lie algebras that are not necessarily nilpotent.}

\linespread{1.2}

\section{Introduction}

The classical theorem of Ado and Iwasawa states that every finite-dimensional Lie algebra $\Lg$ over a field $\F$ admits a finite-dimensional, faithful representation. That is: there exists a natural number $n$ and a homomorphism $\map{\varphi}{\Lg}{\Lg\Ll_n(\F)}$ of Lie algebras with $\operatorname{ker}(\varphi) = \{ 0 \}$. But it is in general quite difficult to determine whether a given Lie algebra $\Lg$ admits a faithful representation of a given degree $n$. \newline

The problem is easily seen to be equivalent to the computation of the minimal degree $\mu(\Lg)$ of a faithful representation of $\Lg$. The study of this invariant was crucial in finding (filiform) counter-examples to a conjecture by Milnor on the existence of affine structures on manifolds, \cite{Milnor}, \cite{Benoist} and \cite{BurdeGrunewald}. Lower and upper bounds for $\mu(\Lg)$ in terms of other natural invariants of $\Lg$ were given for various families of Lie algebras by Benoist, Birkhoff, Burde, de Graaf, Eick, Grunewald and the author, \cite{Birkhoff}, \cite{BurdeAdo}, \cite{BEG_Algorithm}, \cite{BurdeMoensReductive}, \cite{BurdeMoensQuotient}, and \cite{deGraaf}. This paper aims to refine the current upper bounds. \newline

We recall two major techniques that have been used historically to construct faithful representations. A nilpotent Lie algebra $\Ln$ is known to act faithfully on its universal enveloping algebra $\mathcal{U}(\Ln)$, and Birkhoff observed that the subspace $S$ of all sufficiently large elements (made precise in the next sections), yields a faithful quotient module $\mathcal{U}(\Ln) / S$ of dimension $\frac{d^{c+1}-1}{d-1}$,
%$d^c + d^{c-1} + \cdots + d + 1$,
 where $d$ is the dimension of $\Ln$ and $c$ is its class. Mal'cev later observed that Ado's theorem is a natural consequence of the existence of almost-algebraic hulls (also called splittings) and Birkhoff's construction. A constructive approach to Mal'cev's theorem by Neretin made use of elementary expansions and it allowed Burde and the author to find explicit upper bounds for $\mu(\Lg)$ for all finite-dimensional complex Lie algebras $\Lg$: $ \mu(\Lg) = O(2^d). $ \newline

Several examples in the literature then suggested that Lie algebras which have an abelian ideal of small codimension in the solvable radical also have a small, faithful representation (see for example \cite{BurdeMoensReductive} and propositions $2.12$, $2.15$, $4.5$ and remark $4.8$ of \cite{BurdeMoensQuotient}). This was made precise and proven by Burde and the author to obtain bounds of the form $\mu(\Ln) = O(d^{\gamma + 1})$, where $\Ln$ is a $d$-dimensional nilpotent Lie algebra and $\gamma$ is the minimal codimension of an abelian ideal. \newline

We will prove that these results can be generalised to nilpotent ideals of arbitrary class:

\begin{theorem} \label{TheoremMain} Consider a Lie algebra $\Lg$ over an algebraically-closed field of characteristic zero. Let $d$ be its dimension, let $r$ be the dimension of the solvable radical and let $n$ be the dimension of the nilradical. Suppose $\Lg$ has a nilpotent ideal of class $\varepsilon_1$ and codimension $\varepsilon_2$ in $\rad(\Lg)$. Then $$\mu(\Lg) \leq d - n + \binom{r + \varepsilon_1}{\varepsilon_1} \cdot \binom{r + \varepsilon_2}{\varepsilon_2}.$$
\end{theorem}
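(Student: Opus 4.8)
The plan is to reduce the general statement to two ingredients that are already visible in the introduction: the Birkhoff-type construction for the nilpotent part, and the almost-algebraic hull (splitting) of Auslander--Brezin to handle the passage from the solvable radical to all of $\Lg$. Let me call the given nilpotent ideal $\Lh \trianglelefteq \rad(\Lg)$, with nilpotency class $\varepsilon_1$ and $\dim(\rad(\Lg)/\Lh) = \varepsilon_2$. The first step is to prove the purely \emph{solvable} case: construct a faithful representation of $\rad(\Lg)$ whose degree is controlled by $\binom{r+\varepsilon_1}{\varepsilon_1}\cdot\binom{r+\varepsilon_2}{\varepsilon_2}$. The idea is to take the universal enveloping algebra $\mathcal{U}(\Lh)$, which $\Lh$ acts on faithfully, pass to the Birkhoff quotient $\mathcal{U}(\Lh)/S$ where $S$ is the left ideal generated by products that are ``too long'' relative to the class $\varepsilon_1$, and then extend this to an action of the enveloping algebra of all of $\rad(\Lg)$. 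Because $\rad(\Lg)$ normalises $\Lh$, the derivations induced on $\mathcal{U}(\Lh)$ by the complement coordinates respect a suitable filtration, and one gets a module for $\rad(\Lg)$ on a space whose dimension is the product of a ``PBW count for $\Lh$ inside $\rad$'' (giving $\binom{r+\varepsilon_1}{\varepsilon_1}$, where $r$ enters because the acting derivations can raise degrees by elements of the $\varepsilon_2$-dimensional complement, but bounded in terms of $r$) and a ``PBW count for the complement'' (giving $\binom{r+\varepsilon_2}{\varepsilon_2}$). Faithfulness of the combined action has to be checked: $\Lh$ acts faithfully by Birkhoff, and the complement acts faithfully because its coordinates act as linearly independent derivations; one argues that a nonzero element of $\rad(\Lg)$ cannot annihilate the whole module by projecting onto either the $\Lh$-part or the complement-part.

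The second step upgrades from $\rad(\Lg)$ to $\Lg$. Here one invokes the Levi decomposition $\Lg = \Ls \ltimes \rad(\Lg)$ together with the almost-algebraic hull: one replaces $\rad(\Lg)$ by an almost-algebraic Lie algebra, which lets the semisimple part $\Ls$ act on the module built in step one in a way compatible with the radical action, so that $\Lg$ acts on essentially the same space, enlarged only by the summand needed to realise $\Ls$ faithfully and to accommodate the nilradical. This is where the term $d - n$ comes from: outside the nilradical, the quotient $\Lg/\Ln$ has dimension $d-n$ and is reductive-by-something for which one already has a faithful representation of degree $d-n$ (the adjoint-type representation of the non-nilpotent directions), and one forms a direct sum or tensor-type combination of that with the module from step one, arranging that the nilradical — which is the part not yet faithfully represented by the $d-n$ piece — is precisely covered by the Birkhoff module because $\Ln \cap \rad(\Lg) = \Ln$ and $\Lh$ can be taken inside $\Ln$. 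The bookkeeping that $\varepsilon_1, \varepsilon_2$ are measured relative to $\rad(\Lg)$ and not relative to $\Ln$ must be threaded carefully through the binomial estimates, using that a PBW monomial basis of the relevant quotient of $\mathcal{U}(\rad(\Lg))$ truncated at the appropriate degrees has size $\binom{r+\varepsilon_1}{\varepsilon_1}\cdot\binom{r+\varepsilon_2}{\varepsilon_2}$.

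The main obstacle I expect is the \emph{compatibility} of the two constructions — making the $\Ls$-action and the almost-algebraic completion of $\rad(\Lg)$ act on one and the same truncated enveloping-algebra module without destroying either faithfulness or the dimension bound. Concretely: the Birkhoff quotient $\mathcal{U}(\Lh)/S$ is a module for $\Lh$, but to let $\rad(\Lg)$ and then $\Ls$ act, one needs the truncating ideal $S$ to be invariant under the larger algebras, and one needs the grading/filtration that controls its dimension to be respected by all the extra operators; replacing $\rad(\Lg)$ by its almost-algebraic hull introduces semisimple derivations (the toral part), and one must check these preserve $S$ and do not blow up the count. A secondary difficulty is the additivity ``$d-n + (\text{product})$'' rather than a multiplicative bound: one wants a \emph{direct sum} of the $(d-n)$-dimensional faithful module for $\Lg/\Ln$ and the module handling $\Ln$, and one has to verify the sum is genuinely faithful on all of $\Lg$ — i.e.\ that no element of $\Lg$ lies in the kernel of both summands — which reduces to the observation that the kernel of the first is contained in $\Ln$ and the second is faithful on (a Lie algebra containing) $\Ln$.
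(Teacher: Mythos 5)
There is a genuine gap, and it sits exactly where you locate your ``main obstacle'': the construction of a $\rad(\Lg)$-module out of $\CU(\Lh)$ does not work as described, and the compatibility problem you flag is not resolved but is in fact the heart of the matter. Two concrete failures. First, ``$\Lh$ acts by left multiplication, the complement acts by derivations'' does not define a representation of $\rad(\Lg)$: a vector-space complement of $\Lh$ in $\rad(\Lg)$ is in general not a subalgebra, and a bracket of two complement elements may land in $\Lh$, where it would have to act both as the derivation $\ad$ and as left multiplication -- these operators differ on $\CU(\Lh)$. A semidirect-type action on an enveloping algebra only exists once one has a genuine splitting in which the multiplying part is a nilpotent ideal containing everything that cannot act reductively; this is precisely what the almost-algebraic hull supplies, which is why the paper applies the hull \emph{before} the enveloping-algebra construction, not after. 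Second, your faithfulness claim for the complement (``its coordinates act as linearly independent derivations'') is false in general: any element of $\rad(\Lg)$ centralising $\Lh$ acts as the zero derivation on $\CU(\Lh)$. For the Heisenberg algebra with $\Lh$ its centre ($\varepsilon_1=1$, $\varepsilon_2=2$) the whole complement acts trivially. Also, truncating only by $\Lh$-length cannot control the complement directions at all, so the asserted dimension count (a product of two PBW counts) is not produced by the construction you describe.

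The paper's route is structurally different in the order of operations and in which enveloping algebra is used. It first embeds $\Lg$ into the hull $\Lp\ltimes\Lm$ (Proposition \ref{PropEmbedding}) with $\dim(\Lm)=r$, $\dim(\Lp)=d-n$, and -- crucially, via the Auslander--Brezin lemma that elementary expansions preserve ideals -- $\iota(\Lh)$ remains a nilpotent ideal \emph{inside} $\Lm$ with the same class $\varepsilon_1$ and codimension $\varepsilon_2$. It then works in $\CU(\Lm)$, not $\CU(\Lh)$, and truncates by the sum of two weight-submodules: one from the lower central series of $\Lm$ (cut at $c(\Lm)+1$, which is what makes the quotient finite-dimensional and yields the factor $\binom{r+\varepsilon_2}{\varepsilon_2}$) and one from the $(\Lm,\Lh)$-filtration (cut at $\varepsilon_1+1$, yielding $\binom{r+\varepsilon_1}{\varepsilon_1}$). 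Both truncations are $(\Lp\ltimes\Lm)$-invariant because the weights are compatible with the action, faithfulness is immediate since the degree-one monomials survive and $\Lp/\Lp_0$ acts faithfully on $\Lm$, and the $d-n$ term comes from the reductive kernel $\Lp_0$ of the $\Lp$-action together with monotonicity and subadditivity of $\mu$ -- not from a faithful representation of $\Lg/\Ln$ pulled back to $\Lg$. So while your high-level ingredients (Birkhoff-style truncation, hull, additive $d-n$ term) are the right ones, the proposal as written would need to be reorganised along these lines to close the gap.
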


%By specialising to nilpotent Lie algebras and $d = n$, $\varepsilon_2 = 0$, we recover a theorem by De Graaf:

%\begin{theorem}[\cite{DeGraaf}] Consider a nilpotent Lie algebra $\Ln$ over an algebraically-closed field of characteristic zero. Let $d$ be its dimension, and let $c$ be its class. Then $\Ln$ has a faithful representation of degree $$\binom{d + c}{c}.$$\end{theorem}

%We note that de Graaf's theorem for \emph{nilpotent} Lie algebras, \cite{DeGraaf}, corresponds with the special case where the ideal is chosen to be the whole Lie algebra: $d = n$, $\varepsilon_1 = c(\Lg)$ and $\varepsilon_2 = 0$. We define the nil defect $\varepsilon = \varepsilon(\Lg)$ of $\Lg$ to be the minimal value $\varepsilon_1 + \varepsilon_2$ as $\Lh$ runs over all nilpotent ideals of $\Lg$.

We note that de Graaf's theorem, \cite{deGraaf}, corresponds with the special case where the Lie algebra is itself nilpotent and the ideal is chosen to be the whole Lie algebra: $d := n$, $\varepsilon_1 := c(\Lg)$ and $\varepsilon_2 := 0$. We define the nil-defect $\varepsilon = \varepsilon(\Lg)$ of $\Lg$ to be the minimal value $\varepsilon_1 + \varepsilon_2$ as $\Lh$ runs over all nilpotent ideals of $\Lg$.

\begin{corollary} \label{CorPolynomial} Consider a Lie algebra $\Lg$ over an algebraically-closed field of characteristic zero. Let $d$ be its dimension and let $\varepsilon$ be its nil-defect. Then $\Lg$ has a faithful representation of degree $$ P_\varepsilon(d) := d + \frac{(d+\varepsilon) \cdots (d+1)}{\lfloor \frac{\varepsilon}{2} \rfloor ! \cdot \lceil \frac{\varepsilon}{2} \rceil!}.$$
\end{corollary}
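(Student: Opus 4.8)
The plan is to deduce the corollary from Theorem~\ref{TheoremMain} by feeding it an optimally chosen nilpotent ideal and then simplifying the resulting bound on the binomial coefficients. First I would fix a nilpotent ideal $\Lh$ of $\Lg$ for which the sum of its nilpotency class $\varepsilon_1$ and its codimension $\varepsilon_2 := \dim\rad(\Lg) - \dim\Lh$ in the radical is as small as possible; since $\{0\}$ is always such an ideal and $\varepsilon_1+\varepsilon_2$ is a non-negative integer, a minimiser exists and realises $\varepsilon_1+\varepsilon_2 = \varepsilon$ by the definition of the nil-defect. Applying Theorem~\ref{TheoremMain} to $\Lh$ gives
$$\mu(\Lg) \;\le\; d - n + \binom{r+\varepsilon_1}{\varepsilon_1}\binom{r+\varepsilon_2}{\varepsilon_2},$$
and since the nilradical satisfies $n \ge 0$ we may weaken $d-n$ to $d$. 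It therefore suffices to prove the purely arithmetic inequality
$$\binom{r+\varepsilon_1}{\varepsilon_1}\binom{r+\varepsilon_2}{\varepsilon_2} \;\le\; \frac{(d+\varepsilon)\cdots(d+1)}{\lfloor \tfrac{\varepsilon}{2}\rfloor!\,\lceil\tfrac{\varepsilon}{2}\rceil!}.$$

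I would establish this in three short steps. Step one: because $r = \dim\rad(\Lg) \le d$ and the map $x \mapsto \binom{x+k}{k}$ is non-decreasing on the non-negative integers for each fixed $k$, we get $\binom{r+\varepsilon_i}{\varepsilon_i} \le \binom{d+\varepsilon_i}{\varepsilon_i}$ for $i\in\{1,2\}$. Step two: writing both binomials as quotients of factorials, the estimate $\binom{d+\varepsilon_1}{\varepsilon_1}\binom{d+\varepsilon_2}{\varepsilon_2} \le \frac{(d+\varepsilon)\cdots(d+1)}{\varepsilon_1!\,\varepsilon_2!}$ is equivalent, after clearing denominators, to $\frac{(d+\varepsilon_1)!}{d!} \le \frac{(d+\varepsilon)!}{(d+\varepsilon_2)!}$; both sides are products of $\varepsilon_1$ consecutive integers, and each factor on the right exceeds the corresponding factor on the left because $\varepsilon_2 \ge 0$. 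Step three: subject to $\varepsilon_1+\varepsilon_2 = \varepsilon$ with $\varepsilon_i \ge 0$, the product $\varepsilon_1!\,\varepsilon_2!$ is minimised by the balanced split — equivalently, $\binom{\varepsilon}{\varepsilon_1}$ is largest for $\varepsilon_1$ closest to $\varepsilon/2$ — so that $\varepsilon_1!\,\varepsilon_2! \ge \lfloor\tfrac{\varepsilon}{2}\rfloor!\,\lceil\tfrac{\varepsilon}{2}\rceil!$. Chaining the three steps yields the displayed inequality, and hence $\mu(\Lg) \le P_\varepsilon(d)$, which is exactly the assertion.

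I do not anticipate a genuine obstacle: once Theorem~\ref{TheoremMain} is available the corollary is an exercise in bounding binomial coefficients. The only points needing a little care are the monotonicity in step one (which would fail for negative arguments, but these never occur), the consecutive-factor comparison of step two, and the standard extremal property of $\varepsilon_1!\,\varepsilon_2!$ in step three; one should also observe that the degenerate case $\varepsilon = 0$ is handled by the empty-product convention in the definition of $P_\varepsilon$, giving $P_0(d) = d+1$, for which the statement is clear anyway.
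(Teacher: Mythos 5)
Your proposal is correct and follows essentially the same route as the paper: apply Theorem~\ref{TheoremMain} to an ideal realising the nil-defect, drop $-n$ and replace $r$ by $d$, and then use exactly the paper's two arithmetic observations, namely $(d+\varepsilon_1)!/d!\cdot(d+\varepsilon_2)!/d!\leq(d+\varepsilon_1+\varepsilon_2)!/d!$ and $\lfloor\tfrac{\varepsilon}{2}\rfloor!\cdot\lceil\tfrac{\varepsilon}{2}\rceil!\leq\varepsilon_1!\cdot\varepsilon_2!$. You merely spell out the monotonicity and consecutive-factor comparisons that the paper leaves implicit, so there is nothing to correct.
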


We can also apply the construction to graded Lie algebras; there exists a function $\map{E}{\N \times \N}{\N}$ such that the following holds.

\begin{corollary} \label{CorGrading} Consider a Lie algebra $\Lg = \oplus_{a \in A} \Lg_a$ graded by an abelian, finitely-generated, torsion-free group $(A,+)$. Let $\sigma := |\{ a \in A | \Lg_a \neq \{ 0 \} \}|$ be the cardinality of the support and let $\delta := \dim(\Lg_0)$ be the dimension of the homogeneous component corresponding with the neutral element. Then %the nil-defect of $\Lg$ is at most $E(\sigma,\delta)$ so that 
$\Lg$ admits a faithful representation of degree $P_{E(\sigma,\delta)}(d).$
%$$ d + (d + E(\sigma,\delta))^{E(\sigma,\delta)}. $$ 
\end{corollary}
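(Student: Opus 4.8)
The plan is to reduce Corollary \ref{CorGrading} to Theorem \ref{TheoremMain} by showing that a grading by a nice group forces the existence of a nilpotent ideal whose class and codimension in the radical are controlled purely by the combinatorial data $\sigma$ and $\delta$. The first step is to observe that the homogeneous component $\Lg_0$ is a subalgebra, while the sum $\Ln := \oplus_{a \neq 0} \Lg_a$ of the remaining components is not in general an ideal but does span a subspace complementary to $\Lg_0$. I would instead work with the associated Lie algebra structure: the key point is that $[\Lg_a, \Lg_b] \subseteq \Lg_{a+b}$, so iterated brackets of elements living in nonzero degrees move through the support, and since $(A,+)$ is torsion-free and finitely generated (hence embeds in some $\Z^k$ with a total order compatible with addition), one can pick a monomial order and argue that a sufficiently long bracket of positive-degree elements must vanish once the degree leaves the $\sigma$-element support. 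This should produce a nilpotent ideal — most naturally a sum of homogeneous components, or the nilradical itself — whose nilpotency class is bounded in terms of $\sigma$ and whose codimension in $\rad(\Lg)$ is bounded in terms of $\delta$.

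Concretely, I expect the cleanest route is: first handle the case $A = \Z$, where after reordering the support is $\{a_1 < \cdots < a_\sigma\}$; the subspace spanned by components of strictly positive degree is a nilpotent ideal of the subalgebra $\oplus_{a \geq 0}\Lg_a$ of class at most roughly $\sigma$, and a symmetric statement holds on the negative side. One then assembles a nilpotent ideal $\Lh$ of $\Lg$ (for instance, generated by all positive- and negative-degree components, or the nilradical which contains it) and estimates $\varepsilon_1 = c(\Lh) \leq f_1(\sigma)$ and $\varepsilon_2 = \dim(\rad(\Lg)/\Lh) \leq f_2(\sigma,\delta)$, the latter because everything outside $\Lh$ must be accounted for inside $\Lg_0$ up to the (reductive, non-radical) part. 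The general finitely-generated torsion-free case reduces to the $\Z$-case by choosing a homomorphism $A \to \Z$ injective on the finite support $\{a : \Lg_a \neq \{0\}\}$ — such a homomorphism exists since $A$ is torsion-free and the support is finite — and using the coarser $\Z$-grading it induces, which has the same support size $\sigma$ and the same or larger $\Lg_0$. Setting $E(\sigma,\delta) := f_1(\sigma) + f_2(\sigma,\delta)$ then makes $\varepsilon(\Lg) \leq E(\sigma,\delta)$, and Corollary \ref{CorPolynomial} (applied with this bound on the nil-defect, using monotonicity of $P_\varepsilon(d)$ in $\varepsilon$) yields the claimed degree $P_{E(\sigma,\delta)}(d)$.

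The main obstacle I anticipate is getting an honest bound on $\varepsilon_2$, the codimension in the radical. It is easy to see that the positive/negative homogeneous components generate a nilpotent ideal, but bounding $\dim(\rad(\Lg)) - \dim(\Lh)$ requires controlling how much of the radical can hide inside $\Lg_0$: a priori $\Lg_0$ could be a large solvable algebra with small nilradical, which would wreck a naive bound. The resolution should be that $\Lg_0 \cap \rad(\Lg)$ is itself a Lie algebra of dimension at most $\delta$, so $\varepsilon_2 \leq \delta$ outright (or $\delta$ minus the dimension of the part of $\Lg_0$ already lying in the nilradical), and one does not need anything finer. A secondary subtlety is verifying that the subspace we name $\Lh$ is genuinely an \emph{ideal} of all of $\Lg$ and not merely of $\oplus_{a\geq 0}\Lg_a$; here one uses that $\oplus_{a \neq 0}\Lg_a$ together with the nilradical of $\Lg_0$ spans an ideal, and that the bracket of $\Lg_0$ with any nonzero-degree component lands back in a nonzero-degree component, so the ideal generated by $\oplus_{a\neq 0}\Lg_a$ stays within the span of finitely many components and remains nilpotent. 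Once these two points are pinned down, the definition of $E$ and the appeal to the previous corollary are formal.
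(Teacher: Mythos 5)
There is a genuine gap: you are, in effect, trying to re-prove the Khukhro--Makarenko--Shumyatsky theorem, which is exactly the ingredient the paper imports wholesale. The paper's proof is one line: it quotes the KMS theorem (stated immediately before the corollary) to obtain a nilpotent ideal $\Li$ with $\dim(\Lg/\Li)$ and $c(\Li)$ bounded by functions of $\sigma$ and $\delta$ alone, concludes $\varepsilon(\Lg)\leq\varepsilon(\rad(\Lg),\Li)\leq f+g=:E(\sigma,\delta)$, and then applies Corollary \ref{CorPolynomial}. Your sketch does not deliver that ideal. The monomial-order argument (``a sufficiently long bracket of positive-degree elements must vanish once the degree leaves the support'') only controls iterated brackets of elements all of positive (or all of negative) degree, i.e.\ it shows $\oplus_{a>0}\Lg_a$ is nilpotent of class at most $\sigma$; it says nothing about brackets mixing positive and negative degrees, which keep returning to the support and are where all the difficulty of the KMS theorem lies. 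Your two concrete candidates for $\Lh$ both fail to give what is needed: the ideal generated by $\oplus_{a\neq0}\Lg_a$ need not be nilpotent at all (in $\Lsl_2$ with its standard $\Z$-grading it is the whole algebra), and while its intersection with the radical \emph{is} nilpotent (homogeneous elements of nonzero degree lying in $\rad(\Lg)$ are ad-nilpotent, hence in the nilradical), nothing in your argument bounds its \emph{class} by a function of $\sigma$ and $\delta$; falling back on ``the nilradical'' (which, note, does not contain the ideal generated by the nonzero components in general) bounds the class only by $\dim(\Lg)$, which would yield a bound like $P_{O(d)}(d)$ rather than the claimed $P_{E(\sigma,\delta)}(d)$.

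The parts of your plan that do match the intended structure are peripheral: the bound $\varepsilon_2\leq\delta$ coming from the fact that everything outside the nonzero-degree part of the radical sits in $\Lg_0$, the reduction from a finitely-generated torsion-free group to $\Z$ via a homomorphism injective on the finite support, and the final appeal to Corollary \ref{CorPolynomial} with monotonicity of $P_\varepsilon(d)$ in $\varepsilon$. But the heart of the matter --- the existence of a nilpotent ideal whose \emph{class} is bounded independently of $\dim(\Lg)$ --- is a deep combinatorial theorem (in the tradition of Kreknin and Khukhro--Makarenko, proved via generalized centralizers, with bounds $f,g$ that grow extremely fast), not something recoverable from the vanishing of long one-sided brackets. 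To repair the proof, either cite \cite{KhukhroMakarenkoShumyatsky} as the paper does, or supply a genuine proof of a bound $c(\Lh)\leq f_1(\sigma,\delta)$ for some ideal $\Lh$ of bounded codimension, which your current argument does not contain.
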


\emph{Convention}: we will only consider finite-dimensional Lie algebras over an algebraically-closed field of characteristic zero.

\section{Preliminaries}

In this section we introduce some of the concepts used in our construction of faithful representations. We first define the nil-defect $\varepsilon(\Lg)$ of a Lie algebra $\Lg$. We then show that $\Lg$ can (almost) be embedded into an almost-algebraic Lie algebra $\widehat{\Lg}$ for which the nil-defect $\varepsilon(\widehat{\Lg})$ is at most $\varepsilon(\Lg)$.

\subsection{The nil-defect of a Lie algebra}

The following definition is justified by the existence of a (unique) solvable radical, that is: the theorem of Levi-Malcev.

\begin{definition}[Nil-defect] Let $\Lr$ be a solvable Lie algebra and $\Ln$ be a nilpotent ideal of $\Lr$. The nil-defect $\varepsilon(\Lr,\Ln)$ of $\Ln$ in $\Lr$ is $\dim(\Lr / \Ln) + c(\Ln)$. The nil-defect $\varepsilon(\Lr)$ of $\Lr$ is $$ \min_{\Ln} \{ \varepsilon(\Lr,\Ln) \},$$ where $\Ln$ runs over all nilpotent ideals of $\Lr$. The nil-defect of an arbitrary Lie algebra is the nil-defect of its solvable radical.
\end{definition}

Let us consider a few special cases.

\begin{example} The nil-defect of a semisimple Lie algebra is $0$. The nil-defect of a nilpotent Lie algebra $\Ln$ is bounded by the nilpotency class: $\varepsilon(\Ln) \leq \varepsilon(\Ln,\Ln) = c(\Ln)$. In particular: the family of all Lie algebras of nil-defect at most $\varepsilon$ contains the family of all nilpotent Lie algebras of class at most $\varepsilon$. \end{example}

%We note that the nil-defect of a nilpotent Lie algebra tends to be smaller than the nilpotency class.

We note that Lie algebras of a given nil-defect can have arbitrarily high nilpotency class.

\begin{example} A standard filiform Lie algebra has a nil-defect of $2$, while its class can be chosen arbitrarily high. More generally: for filiform Lie algebras $\Lf$, we have $\varepsilon(\Lf) \leq 2 \sqrt{\dim(\Lf)} + 1$. This is a direct consequence of the fact that any ideal of $\Lf$ with codimension $a > 1$ is nilpotent of class at most $\lceil (c(\Lf)+1)/a \rceil - 1$, cf. \cite{Vergne}. \end{example}

The following result, not strictly necessary for the rest of the paper, is due to B. Kostant and allows us to approximate solvable Lie algebras with nilpotent \emph{subalgebras}, rather than ideals (personal communication with G. Glauberman and N. Wallach; see for example \cite{Kostant} and \cite{AlperinGlauberman}. See also \cite{BurdeCeballos}.) %\newline %For a Lie algebra $\Lg$, let $\mathcal{N}'(\Lg)$ be the family of all nilpotent subalgebras. \newline

\begin{theorem} Consider a finite-dimensional complex, \emph{solvable} Lie algebra $\Lr$ and a nilpotent subalgebra $\Ln$. Then $\Lr$ has an ideal $\Lh$ of class at most $c(\Ln)$ and dimension equal to $\dim(\Ln)$. In particular: $$ \varepsilon(\Lr) = \min_{\Lm} \{ c(\Lm) + \operatorname{codim}_{\Lr}(\Lm) \},$$ where $\Lm$ runs over the nilpotent subalgebras of $\Lr$. \end{theorem}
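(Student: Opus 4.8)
The plan is to deduce the statement from Borel's fixed-point theorem, applied to an action on a Grassmannian. First observe that the ``in particular'' is purely formal once the first assertion is known: every ideal is a subalgebra, so the minimum of $c(\Lm)+\operatorname{codim}_\Lr(\Lm)$ over nilpotent subalgebras $\Lm$ is at most the same minimum over nilpotent ideals; conversely, given a nilpotent subalgebra $\Lm$, the first assertion yields a nilpotent ideal $\Lh$ with $c(\Lh)\le c(\Lm)$ and $\dim(\Lh)=\dim(\Lm)$, hence $c(\Lh)+\operatorname{codim}_\Lr(\Lh)\le c(\Lm)+\operatorname{codim}_\Lr(\Lm)$, giving the reverse inequality (and identifying $\varepsilon(\Lr)$ with that minimum). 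So I would concentrate on the first assertion: from a nilpotent subalgebra $\Ln$ of dimension $k$ and class $c$, produce a nilpotent \emph{ideal} of dimension $k$ and class at most $c$.

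The geometric setup is as follows. Inside the projective variety $\operatorname{Gr}(k,\Lr)$ of $k$-dimensional subspaces of $\Lr$, let $X$ be the set of those subspaces $W$ with $[W,W]\subseteq W$ and $\gamma_{c+1}(W)=\{0\}$, i.e.\ the $k$-dimensional subalgebras that are nilpotent of class at most $c$ (here $\gamma_{c+1}(W)$ is the $(c+1)$-st term of the lower central series of $W$). A routine verification shows that both conditions are Zariski-closed on the Grassmannian — each is the vanishing of a fixed family of multilinear expressions evaluated on a frame of $W$ — so $X$ is a closed, hence complete, subvariety of $\operatorname{Gr}(k,\Lr)$, and it is non-empty because $\Ln\in X$.

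Now bring in the group. The automorphism group $\operatorname{Aut}(\Lr)\subseteq\operatorname{GL}(\Lr)$ is a linear algebraic group with Lie algebra $\Der(\Lr)$, and $\ad(\Lr)$ is a solvable Lie subalgebra of $\Der(\Lr)$. Let $H\le\operatorname{Aut}(\Lr)$ be the connected algebraic subgroup whose Lie algebra is the algebraic hull of $\ad(\Lr)$ inside $\Der(\Lr)$; since the algebraic hull of a solvable linear Lie algebra is again solvable (the kind of almost-algebraic-hull fact already used in this paper), $H$ is connected and solvable, and $\ad(\Lr)\subseteq\operatorname{Lie}(H)$. As $H$ acts on $\Lr$ by Lie-algebra automorphisms, it acts algebraically on $\operatorname{Gr}(k,\Lr)$ and preserves the closed subvariety $X$. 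By Borel's fixed-point theorem the connected solvable group $H$ has a fixed point $W_0\in X$; differentiating the identity $g\cdot W_0=W_0$ along one-parameter subgroups gives $T\cdot W_0\subseteq W_0$ for all $T\in\operatorname{Lie}(H)$, in particular $\ad(x)W_0\subseteq W_0$ for all $x\in\Lr$, i.e.\ $[\Lr,W_0]\subseteq W_0$. Thus $\Lh:=W_0$ is an ideal of $\Lr$, and by membership in $X$ it is a $k$-dimensional subalgebra, nilpotent of class at most $c$. This is exactly the required ideal.

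The part I expect to be the real obstacle is not the geometry but the two supporting facts that make it run: (i) that $\{\,W:[W,W]\subseteq W,\ \gamma_{c+1}(W)=\{0\}\,\}$ is genuinely Zariski-closed in the Grassmannian, so that $X$ is complete and Borel's theorem applies; and (ii) that the algebraic hull of the solvable Lie algebra $\ad(\Lr)$ is solvable, which is what lets us apply Borel to $H$ rather than to $\operatorname{Aut}(\Lr)$ itself (the latter can be reductive and has no reason to fix a point). Granting these, the remaining steps are bookkeeping.
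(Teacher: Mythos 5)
Your argument is correct, and in fact it is the standard proof of this result. Note that the paper itself offers no proof at all: the theorem is stated as a known result of Kostant (via personal communication and the references to Kostant and Alperin--Glauberman), so there is no in-paper argument to compare against; what you have written is essentially the Borel fixed-point argument that underlies those sources. Your reduction of the ``in particular'' to the first assertion is exactly right, and the two facts you flag as the real content are indeed the only nontrivial inputs, both standard: (i) the locus $X=\{W\in\operatorname{Gr}(k,\Lr)\ :\ [W,W]\subseteq W,\ \gamma_{c+1}(W)=0\}$ is Zariski-closed, which one checks on the affine charts of the Grassmannian, where a basis of $W$ can be chosen to depend polynomially on the chart coordinates and both conditions become polynomial equations; and (ii) the algebraic hull $\widehat{\La}$ of the solvable algebra $\ad(\Lr)\subseteq\Der(\Lr)$ is solvable, which follows from the classical identity $[\widehat{\La},\widehat{\La}]=[\ad(\Lr),\ad(\Lr)]$ for algebraic hulls (Chevalley; see also Onishchik--Vinberg), and it lies in $\Der(\Lr)$ because $\Der(\Lr)$ is the (algebraic) Lie algebra of $\operatorname{Aut}(\Lr)$. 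With these granted, Borel's fixed-point theorem applied to the connected solvable group $H$ acting on the complete variety $X\ni\Ln$ produces a subspace $W_0$ that is simultaneously a nilpotent subalgebra of dimension $\dim(\Ln)$ and class at most $c(\Ln)$ and is $\ad(\Lr)$-stable, hence an ideal, as you say. A cosmetic simplification: instead of invoking the hull, you could take $H$ to be the Zariski closure of the subgroup of $\operatorname{Aut}(\Lr)$ generated by all $\exp(\ad x)$, $x\in\Lr$, which is connected and solvable for the same reason; but your version is fine as written.
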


%\textbf{Theorem.} \emph{Consider a finite-dimensional complex, \emph{solvable} Lie algebra $\Lr$ and a nilpotent subalgebra $\Ln$. Then $\Lr$ has an ideal $\Lh$ of class at most $c(\Ln)$ and dimension equal to $\dim(\Ln)$. In particular:} $$ \varepsilon(\Lr) = \min_{\Lm} \{ c(\Lm) + \operatorname{codim}_{\Lr}(\Lm) \},$$ \emph{where} $\Lm$ \emph{runs over the nilpotent subalgebras of} $\Lr$. 

\subsection{Almost-algebraic Lie algebras}

%In the following it will be convenient to work with almost-algebraic Lie algebras. 
%In this section we are given an arbitrary Lie algebra $\Lg$ and we show that there is an embedding $\map{\iota}{\Lg}{\widehat{\Lg}}$ of $\Lg$ into a slightly larger Lie algebra $\widehat{\Lg}$ that admits a ``nice'' decomposition. In order to do this, we first recall the definition of almost-algebraic Lie algebras:

\begin{definition}[Almost-algebraic] A Lie algebra $\Lg$ is almost-algebraic if it admits a decomposition of the form $\Lg = \Lp \ltimes \Lm$, where $\Lm$ is the nilradical of $\Lg$ and $\Lp$ is a subalgebra of $\Lg$ that acts fully reducibly on $\Lg$ (by the adjoint representation). \end{definition}

A theorem by Mal'cev, later generalized by Auslander and Brezin, states that every finite-dimensional Lie algebra over an algebraically closed field of characteristic zero can be embedded into an almost-algebraic Lie algebra, and that there is a minimal such algebra: the \emph{almost-algebraic hull}, \cite{Malcev}, \cite{AuslanderBrezin}, \cite{OnishchikVinberg}. Neretin later gave an explicit construction, as a succession of finitely many elementary expansions, of such an embedding, \cite{Neretin}. This construction was used by Burde and the author to obtain explicit upper bounds for $\mu(\Lg)$, \cite{BurdeMoensQuotient}. Auslander and Brezin observed that ideals are compatible with elementary expansions:

\begin{lemma} Let $\map{\iota_1}{\Lg_1}{\Lg_2}$ be an elementary expansion of $\Lg_1$. Then every ideal $\Li$ of $\Lg_1$ maps onto an ideal $\iota_1(\Li)$ of $\Lg_2$. \end{lemma}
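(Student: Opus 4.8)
The plan is to unwind the definition of an elementary expansion and then verify the ideal property by a direct bracket computation. Recall (see \cite{Neretin} and \cite{BurdeMoensQuotient}) that an elementary expansion of $\Lg_1$ is obtained by choosing an element $x \in \Lg_1$ whose inner derivation $\ad(x)$ has semisimple part $D := \ad(x)_s$ (formed in the algebraic Lie algebra $\Der(\Lg_1)$) not lying in $\ad(\Lg_1)$, and then forming the semidirect product $\Lg_2 := \F D \ltimes \iota_1(\Lg_1)$, where $\iota_1$ is the canonical embedding and $D$ acts on $\iota_1(\Lg_1) \cong \Lg_1$ through the derivation $D$. The single feature of this construction that I will use is the standard fact from linear algebra that the semisimple part of an endomorphism of a finite-dimensional vector space is a polynomial in that endomorphism with vanishing constant term; in particular $D = q(\ad(x))$ for some $q \in \F[t]$ with $q(0) = 0$.

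Now fix an ideal $\Li$ of $\Lg_1$; I must show that $\iota_1(\Li)$ is an ideal of $\Lg_2$. As a vector space $\Lg_2 = \F D \oplus \iota_1(\Lg_1)$, so it suffices to check that both $[\F D, \iota_1(\Li)]$ and $[\iota_1(\Lg_1), \iota_1(\Li)]$ are contained in $\iota_1(\Li)$. The second inclusion is immediate, since $\iota_1$ is a homomorphism of Lie algebras and $\Li$ is an ideal of $\Lg_1$: $[\iota_1(\Lg_1), \iota_1(\Li)] = \iota_1([\Lg_1, \Li]) \subseteq \iota_1(\Li)$. For the first inclusion, note that $[D, \iota_1(y)] = \iota_1(D(y))$ for every $y \in \Lg_1$ by the definition of the action of $D$ in the semidirect product, so $[\F D, \iota_1(\Li)] = \iota_1(D(\Li))$. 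Since $\Li$ is an ideal of $\Lg_1$ and $x \in \Lg_1$, we have $\ad(x)(\Li) = [x,\Li] \subseteq \Li$, hence $\ad(x)^k(\Li) \subseteq \Li$ for all $k \geq 1$; because $D = q(\ad(x))$ with $q(0) = 0$, it follows that $D(\Li) \subseteq \Li$, and therefore $[\F D, \iota_1(\Li)] \subseteq \iota_1(\Li)$. Combining the two inclusions yields $[\Lg_2, \iota_1(\Li)] \subseteq \iota_1(\Li)$, which is the assertion.

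The argument is short, and there is no genuine obstacle beyond having the correct description of an elementary expansion at hand: everything reduces to the observation that the adjoined derivation is a polynomial, without constant term, in an inner derivation of $\Lg_1$, together with the fact that inner derivations preserve ideals. The only point deserving a word of justification is that $\ad(x)_s$ may be written as such a polynomial; this is the usual refinement of the Jordan decomposition via the Chinese remainder theorem, where one imposes, alongside the congruences defining $\ad(x)_s$, the extra congruence $q \equiv 0$ modulo $t$, which is automatically compatible with the others (and redundant as soon as $0$ is an eigenvalue of $\ad(x)$).
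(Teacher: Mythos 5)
Your proof is correct. The paper itself gives no argument for this lemma --- it is stated as an observation of Auslander and Brezin, with the elementary-expansion machinery cited from Neretin and Burde--Moens --- and your argument is exactly the standard one that fills this in: since the adjoined derivation $D=\ad(x)_s$ is a polynomial in $\ad(x)$ with zero constant term (your Chinese-remainder remark handles the constant term correctly), every ideal of $\Lg_1$ is $D$-invariant, and the two bracket inclusions you check are precisely what is needed. The only fact you use implicitly beyond this, namely that the semisimple part of a derivation is again a derivation so that $\Lg_2=\F D\ltimes\iota_1(\Lg_1)$ makes sense, is part of the definition of an elementary expansion rather than something the lemma requires you to prove, so there is no gap.
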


In particular: if $(\map{\iota_j}{\Lg_j}{\Lg_{j+1}})_{1 \leq j \leq u}$ is a finite sequence of elementary expansions, and $i =: \iota_u \circ \iota_{u-1} \circ \cdots \circ \iota_1$, then every ideal $\Li$ of $\Lg_1$ maps onto an ideal $\iota(\Li)$ of $\Lg_{u+1}$. We may thus combine the lemma with proposition $4.1$ of \cite{BurdeMoensQuotient} to obtain the following theorem.

%\begin{proposition}[Embedding] Let $\Lg$ be a finite-dimensional Lie algebra over the complex numbers. Let $\Lr$ be its solvable radical and let $\Ln$ be its nilradical. Then there exists an embedding $\map{\iota}{\Lg}{\Lp \ltimes \Lm}$ of $\Lg$ into the Lie algebra $\Lp \ltimes \Lm$ such that:
%\begin{enumerate}
%\item $\Lm$ is nilpotent and $\Lp$ acts fully reducibly on $\widehat{\Lg}$,
%\item $\dim(\Lm) = \dim(\Lr)$ and $\dim(\Lp) \leq \dim(\Lg / \Ln)$,
%\item $\iota(\Ln) \subseteq \Lm$ and if $\Lh$ is a nilpotent ideal of $\Lg$ then $\iota(\Lh)$ is a nilpotent ideal of $\Lp \ltimes \Lm$ contained in $\Lm$.
%\end{enumerate}
%\end{proposition}

\begin{proposition}[Good embedding] \label{PropEmbedding} Let $\Lg$ be a finite-dimensional Lie algebra over the complex numbers. Let $\Lr$ be its solvable radical and let $\Ln$ be its nilradical. Then there exists an embedding $\map{\iota}{\Lg}{\widehat{\Lg}}$ of $\Lg$ into the Lie algebra $\widehat{\Lg}$ such that:
\begin{enumerate}
\item (Decomposition): $\widehat{\Lg}$ decomposes as $\Lp \ltimes \Lm$, where $\Lm$ is nilpotent and $\Lp$ acts fully reducibly on $\widehat{\Lg}$,
\item (Controll of dimensions): $\dim(\Lm) = \dim(\Lr)$ and $\dim(\Lp) = \dim(\Lg / \Ln)$,
\item (Preservation of ideals): if $\Lh$ is a nilpotent ideal of $\Lg$, then $\iota(\Lh)$ is a nilpotent ideal of $\Lp \ltimes \Lm$ contained in $\Lm$.
\end{enumerate}
\end{proposition}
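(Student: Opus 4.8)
The plan is to assemble the statement from two ingredients already cited in the excerpt: Neretin's explicit construction of an embedding of $\Lg$ into its almost-algebraic hull as a finite composition of elementary expansions, together with proposition $4.1$ of \cite{BurdeMoensQuotient} which controls the dimensions of the pieces, and the Auslander--Brezin lemma stated just above, which says that ideals survive elementary expansions. First I would invoke Neretin to produce a finite sequence $(\map{\iota_j}{\Lg_j}{\Lg_{j+1}})_{1 \leq j \leq u}$ with $\Lg_1 = \Lg$ and $\Lg_{u+1}$ almost-algebraic, and set $\iota := \iota_u \circ \cdots \circ \iota_1$, $\widehat{\Lg} := \Lg_{u+1}$. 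By the definition of almost-algebraic, $\widehat{\Lg}$ decomposes as $\Lp \ltimes \Lm$ with $\Lm$ the nilradical and $\Lp$ acting fully reducibly; this is item $1$ verbatim. For item $2$, I would appeal to proposition $4.1$ of \cite{BurdeMoensQuotient}, which is precisely the statement that the hull obtained this way has $\dim(\Lm) = \dim(\rad(\Lg))$ and $\dim(\Lp) = \dim(\Lg/\Ln)$ — each elementary expansion enlarges the solvable radical or the complement in a tracked way, and the arithmetic telescopes.

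For item $3$, I would take a nilpotent ideal $\Lh$ of $\Lg$ and apply the Auslander--Brezin lemma iteratively: $\iota_1(\Lh)$ is an ideal of $\Lg_2$, hence $\iota_2(\iota_1(\Lh))$ is an ideal of $\Lg_3$, and so on, so that $\iota(\Lh)$ is an ideal of $\widehat{\Lg}$. Nilpotency of $\iota(\Lh)$ follows because $\iota$ is an injective Lie-algebra homomorphism, so $\iota(\Lh) \cong \Lh$ as a Lie algebra and the lower central series is preserved. It then remains to see $\iota(\Lh) \subseteq \Lm$: since $\Lh$ is a nilpotent ideal of $\Lg$ it lies in the nilradical $\Ln$ of $\Lg$, and $\iota(\Lh)$ is a nilpotent ideal of $\widehat{\Lg}$, hence is contained in the nilradical of $\widehat{\Lg}$, which is exactly $\Lm$ by the decomposition in item $1$. (One should double-check that $\iota$ carries $\Ln$ into $\Lm$, but this is immediate from $\iota(\Ln)$ being a nilpotent ideal of $\widehat{\Lg}$.)

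The only real content that is not already packaged for us is the bookkeeping in item $2$, and that is imported wholesale from proposition $4.1$ of \cite{BurdeMoensQuotient}; so the main obstacle is essentially citational rather than mathematical — one must check that the cited proposition delivers the dimension counts in the exact normalization used here (nilradical versus solvable radical, and $\Lg/\Ln$ versus a Levi complement). A secondary point worth a sentence is the passage from "ideal of $\Lg_1$ maps onto an ideal of $\Lg_{u+1}$" in the remark following the Auslander--Brezin lemma to the claim about nilpotent ideals landing inside $\Lm$: this uses only that the nilradical of an almost-algebraic Lie algebra is the $\Lm$ in its decomposition, which is part of the definition. Everything else is a routine composition of the three quoted facts.
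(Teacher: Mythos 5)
Your overall route is the same as the paper's: cite proposition $4.1$ of \cite{BurdeMoensQuotient} (the Neretin-style chain of elementary expansions with respect to the nilradical) for items $(1)$ and $(2)$, and iterate the Auslander--Brezin lemma for item $(3)$. The one place where your argument has a genuine gap is the containment $\iota(\Lh) \subseteq \Lm$. You justify it by saying that $\iota(\Lh)$, being a nilpotent ideal of $\widehat{\Lg}$, must lie in the nilradical of $\widehat{\Lg}$, ``which is exactly $\Lm$ by the decomposition in item $1$'' (and likewise your parenthetical check that $\iota(\Ln) \subseteq \Lm$ rests on the same identification). But item $(1)$ only asserts that $\Lm$ is a nilpotent ideal on which $\Lp$ acts fully reducibly; it does \emph{not} assert that $\Lm$ is the nilradical of $\widehat{\Lg}$, and Remark \ref{Remark} of the paper explicitly warns that in the decomposition $\Lp \ltimes \Lm$ produced here the ideal $\Lm$ need not be the whole nilradical (it is only the nilradical of the factor $\Lp/\Lp_0 \ltimes \Lm$ after splitting off $\Lp_0$, the kernel of the $\Lp$-action on $\Lm$). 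So ``nilpotent ideal of $\widehat{\Lg}$ $\Rightarrow$ contained in $\Lm$'' is not available as stated: a priori a nilpotent ideal of $\widehat{\Lg}$ could meet the central part of $\Lp_0$.

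The containment should instead be extracted from the construction itself, which is what the paper's one-line proof implicitly does: every elementary expansion is taken with respect to the nilradical, and proposition $4.1$ of \cite{BurdeMoensQuotient} (equivalently, tracking the expansions step by step) gives $\iota(\Ln) \subseteq \Lm$ directly. Since a nilpotent ideal $\Lh$ of $\Lg$ satisfies $\Lh \subseteq \Ln$, one gets $\iota(\Lh) \subseteq \iota(\Ln) \subseteq \Lm$, while the Auslander--Brezin lemma supplies the ideal property in $\widehat{\Lg}$ and injectivity of $\iota$ supplies nilpotency, exactly as you argue for the rest of item $(3)$. With that one substitution your proof matches the paper's.
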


\begin{proof} Points $(1)$ and $(2)$ can be obtained by expanding $\dim(\Lr / \Ln)$ times with respect to the nilradical, cf. $4.1$ of \cite{BurdeMoensQuotient}. Point $(3)$ follows from the lemma. \end{proof}

\begin{remark} \label{Remark} In the above decomposition $\Lp \ltimes \Lm$, the nilpotent ideal $\Lm$ need not be the whole nilradical. However, if we let $\Lp_0$ be the kernel of the action of $\Lp$ on $\Lm$, then $\Lp \ltimes \Lm \cong \Lp_0 \oplus ( \Lp/\Lp_0 \ltimes \Lm )$ and $\Lm$ will be the nilradical of the almost-algebraic algebra $(\Lp/\Lp_0 \ltimes \Lm)$. \end{remark}

In order to construct faithful representations of $\Lg$ it therefore suffices to construct (sufficiently small) faithful representations of $\widehat{\Lg}$. %It is clear that we should now focus our attention to the faithful representation of Lie algebras that are almost-algebraic and this we will do in the following section.

\section{Quotients of the universal enveloping algebra}

In this section we will construct faithful representations of almost-algebraic Lie algebras $\Lp \ltimes \Lm$. In order to do this we first introduce weight functions $\map{\omega}{\CU(\Lm)}{\N \cup \{ \infty \}}$ on the universal enveloping algebra $\CU(\Lm)$ of the nilpotent Lie algebra $\Lm$. We shall then see that the elements of $\CU(\Lm)$ that are sufficiently large with respect to such a weight function form a $(\Lp \ltimes \Lm)$-submodule $\CS'$ of $\CU(\Lm)$. A good choice of weight functions will allow us to construct a quotient $\CU(\Lm) / \CS$ that is faithful and finite-dimensional. 

\subsection{Filtrations}

In the following sections we will be working with pairs of filtrations of a given nilpotent Lie algebra. It will be convenient to have a basis that is compatible with those filtrations.

\begin{definition} A \emph{filtration} of a Lie algebra $\Lg$ is a flag $(\Lg(t))_{t \in \N}$ of subspaces of $\Lg$ of the form, $\Lg = \Lg(0) \supseteq \Lg(1) \supseteq \Lg(2) \supseteq \cdots $ such that for all $\Lg(i),\Lg(j)$ we have $[\Lg(i),\Lg(j)] \subseteq \Lg(i+j)$. It is a \emph{positive} filtration if $\Lg(0) = \Lg(1)$. \end{definition}

Note that each element of a filtration is an ideal of the Lie algebra. We will consider the following example in the next paragraphs.

\begin{example} Let $\Lh$ be an ideal of a Lie algebra $\Lg$. Then $\Lg(0) := \Lg$, $\Lh(1) := \Lh$ and $\Lg(i) := \Lh_i := [\Lh,\Lh_{i-1}] $ for $i \geq 2$ defines a filtration of $\Lg$. %that need not be positive.
Let us call this the $(\Lg,\Lh)$-filtration. The $(\Lg,\Lh)$-filtration is clearly positive if $\Lg = \Lh$ is nilpotent. \end{example}

\begin{definition} Let $V$ be a vector space and consider a flag $(V_j)_{j \in \N}$ of $V$. We say that a basis $\CB$ of $V$ is \emph{weakly adapted} to the flag, iff for each $V_j$ there exists a subset $\CB_j$ of $\CB$ that is a basis of $V_j$. \end{definition}

Some elementary observations in linear algebra lead to the following.

\begin{lemma} Consider a Lie algebra $\Lg$ and a pair of $\Lg$-filtrations. Then $\Lg$ has a basis that is weakly adapted to both filtrations. \end{lemma}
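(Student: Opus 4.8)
The plan is to forget the Lie-algebra structure entirely: the bracket condition $[\Lg(i),\Lg(j)]\subseteq\Lg(i+j)$ plays no role here, so the statement reduces to the purely linear-algebraic assertion that two descending chains of subspaces of a finite-dimensional space admit a common weakly adapted basis. I would first normalise: since $\dim\Lg<\infty$, each filtration stabilises, so after deleting repeated terms (and appending $\{0\}$ if necessary) we obtain two finite flags $\Lg=V_0\supseteq V_1\supseteq\cdots\supseteq V_k=\{0\}$ and $\Lg=W_0\supseteq W_1\supseteq\cdots\supseteq W_l=\{0\}$. A basis weakly adapted to these two flags is automatically weakly adapted to the original filtrations, because every term of a filtration coincides with some $V_i$ (resp.\ some $W_j$).

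The core of the argument is the bigraded refinement of the two flags. For all $i,j$ put $U_{i,j}:=V_i\cap W_j$ (with the convention $V_{k+1}=W_{l+1}=\{0\}$) and
$$ G_{i,j}:=U_{i,j}\,\big/\,\bigl(U_{i+1,j}+U_{i,j+1}\bigr). $$
For each pair $(i,j)$ I would choose a family $\CC_{i,j}$ of vectors in $U_{i,j}$ whose images form a basis of $G_{i,j}$, and set $\CB:=\bigcup_{i,j}\CC_{i,j}$. The claim to establish is that for every $(i,j)$ the subfamily $\bigcup_{i'\geq i,\;j'\geq j}\CC_{i',j'}$ is a basis of $U_{i,j}$. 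Granting this, specialising to $j=0$ gives that $\bigcup_{i'\geq i}\bigcup_{j'}\CC_{i',j'}$ is a basis of $U_{i,0}=V_i$, so $\CB$ is weakly adapted to the first flag; specialising to $i=0$ does the same for the second flag; and $(i,j)=(0,0)$ shows $\CB$ is a basis of $\Lg$. This finishes the proof.

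I would prove the claim by descending induction on $i+j$, the cases $i=k+1$ or $j=l+1$ being trivial since then $U_{i,j}=\{0\}$. For the spanning half one uses only the definition of $G_{i,j}$, which gives $U_{i,j}=\langle\CC_{i,j}\rangle+U_{i+1,j}+U_{i,j+1}$, together with the inductive hypothesis applied to $U_{i+1,j}$ and $U_{i,j+1}$. For linear independence it suffices to verify the dimension count $\dim U_{i,j}=\sum_{i'\geq i,\;j'\geq j}\dim G_{i',j'}$; this is the one step where the total ordering of each flag genuinely enters, through the identity $U_{i+1,j}\cap U_{i,j+1}=U_{i+1,j+1}$, which is immediate from $V_{i+1}\subseteq V_i$ and $W_{j+1}\subseteq W_j$. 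By inclusion–exclusion it yields $\dim U_{i,j}=\dim G_{i,j}+\dim U_{i+1,j}+\dim U_{i,j+1}-\dim U_{i+1,j+1}$, which then telescopes against the inductive hypothesis. The only place anything could go wrong is precisely this intersection identity: it is what makes the argument survive for flags "in general position", where a naive hyperplane-by-hyperplane induction breaks down, and it is special to chains — for three or more flags there is in general no common weakly adapted basis.
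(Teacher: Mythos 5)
Your proof is correct, and it fills in exactly the step the paper leaves implicit: the paper offers no argument beyond the remark that ``some elementary observations in linear algebra'' give the lemma. Your bigraded refinement $U_{i,j}=V_i\cap W_j$ with the key identity $U_{i+1,j}\cap U_{i,j+1}=U_{i+1,j+1}$ and the dimension-count induction is a complete and standard way to make those observations precise, and your closing remark that the argument is special to two chains matches the paper's comment that the statement fails for triples of filtrations.
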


The corresponding statement for triples of filtrations fails trivially.

\subsection{Notation}

Let us fix some notation. We let $\Lg$ be a fixed almost-algebraic Lie algebra with corresponding decomposition $\Lp \ltimes \Lm$ and a nilpotent ideal $\Lh$ of $\Lg$, necessarily contained in $\Lm$. Let us also fix the following pair of filtrations of $\Lm$: the $(\Lm,\Lm)$-filtration and the $(\Lm,\Lh)$-filtration of $\Lm$. The lemma above then allows us to choose a basis for $\Lm$ that is weakly adapted to both filtrations. Let $\{x_1,\ldots,x_d\}$ be such a basis. \newline

The Poincar\'e-Birkhoff-Witt theorem states that the standard (non-commutative, ordered) monomials in the $x_i$ form a basis for the universal enveloping algebra $\CU(\Lm)$ of $\Lm$. We also recall that $\Lp \ltimes \Lm$ acts naturally on $\CU(\Lm)$: $\Lp$ acts by derivations and $\Lm$ acts by left multiplication. To be precise: $$ \delta \ast (x_{i_1} \cdots x_{i_t}) := \sum_{1 \leq j \leq t} x_{i_1} \cdots x_{i_{j-1}} \cdot [\delta , x_{i_j}] \cdot x_{i_{j+1}} \cdots x_{i_t} $$
and $ x \ast (x_{i_1} \cdots x_{i_t}) := x \cdot x_{i_1} \cdots x_{i_t} $ for all $x \in \Lm$, $\delta \in \Lp$, and monomials $x_{i_1} \cdots x_{i_t}$. The $\Lp \ltimes \Lm$-module $\CU(\Lm)$ is faithful but infinite-dimensional.

\subsection{From filtrations to weights and submodules}

Let us first define weight functions on $\CU(\Lm)$ and then show how they can be constructed from positive filtrations on $\Lm$. Set $\overline{\N} := \N \cup \{ + \infty \}$.

\begin{definition} A map $\map{\omega}{\CU(\Lm)}{\overline{\N}}$ is a \emph{weight} on $\CU(\Lm)$ iff it satisfies the following conditions:
\begin{enumerate}
\item $\omega(X) = + \infty \Leftrightarrow X = 0$
\item $\omega(X + Y) \geq \min\{ \omega(X) , \omega(Y) \}$
\end{enumerate}
for all $X$ and $Y$ in $\CU(\Lm)$. The weight is \emph{compatible with the action} of $\Lp \ltimes \Lm$ on $\CU(\Lm)$ iff in addition the following conditions hold:
\begin{enumerate}
\item[3.] $\omega(x \ast X) \geq \omega(x) + \omega(X)$
\item[4.] $\omega(\delta \ast X) \geq \omega(X)$
\end{enumerate}
for all $X,Y$ in $\CU(\Lm)$, $x$ in $\Lm$ and $\delta$ in $\Lp$.
\end{definition}

%\begin{definition} Let $\omega$ be a weight on $\CU(\Lm)$ and let $k$ be a natural number. Then we define the set $$\CU^k(\Lm,\omega) := \{ X \in \CU(\Lm) | \omega(X) \geq k \}.$$ \end{definition}

We may then define subsets all elements of which are sufficiently large.

\begin{definition} Let $\map{\omega}{\CU(\Lm)}{\overline{\N}}$ be a map and let $k$ be a natural number. Then we define the set $$\CU^k(\Lm,\omega) := \{ X \in \CU(\Lm) | \omega(X) \geq k \}.$$ \end{definition}

%Let us consider a nilpotent ideal $\Ln$ of $\Lg$ and show how it can be used to define a weight on $\CU(\Lm)$. Consider the $(\Lm,\Ln)$-filtration $$ \Lm(0) \supseteq \Lm(1) \supseteq \Lm(2) \supseteq \cdots \supseteq \Lm({c(\Ln)-1}) \supseteq \Lm({c(\Ln)}) = \{ 0 \}.$$ For $x \in \Lm$ we define $$\omega_{(\Lm,\Ln)}(x) := \sup \{ t \in \overline{\N} | x \in \Lm(t) \}$$ and we obtain a map $\map{\omega_{(\Lm,\Ln)}}{\Lm}{\overline{\N}}$. %satisfying all the corresponding properties of the definition above. 
%Let us consider a filtration of $\Lg$ and show how it can be used to define a weight on $\CU(\Lm)$. Let $$ \Lm(0) \supseteq \Lm(1) \supseteq \Lm(2) \supseteq \cdots \supseteq \Lm({c(\Ln)-1}) \supseteq \Lm({c(\Ln)}) = \{ 0 \}$$ be the filtration. For $x \in \Lm$ we define $$\omega_{(\Lm,\Ln)}(x) := \sup \{ t \in \overline{\N} | x \in \Lm(t) \}.$$ We obtain a map $\map{\omega_{(\Lm,\Ln)}}{\Lm}{\overline{\N}}$. %satisfying all the corresponding properties of the definition above. 
Let us consider a filtration of $\Lg$ and show how it can be used to define a weight on $\CU(\Lm)$. Let $ \Lm(0) \supseteq \Lm(1) \supseteq \Lm(2) \supseteq \cdots $ be the filtration. For $x \in \Lm$ we define $$\omega(x) := \sup \{ t \in \overline{\N} | x \in \Lm(t) \}.$$ We obtain a map $\map{\omega}{\Lm}{\overline{\N}}$. %satisfying all the corresponding properties of the definition above. 
We may now extend $\omega$ to all of $\CU(\Lm)$ as follows. For a standard monomial $X^{\alpha} := x_1^{\alpha_1} \cdots x_{d}^{\alpha_d}$, with $\alpha = (\alpha_1,\ldots,\alpha_d) \in \N^d$ we define $$ \omega(X^{\alpha}) := \sum_{1 \leq j \leq d} \alpha_j \cdot \omega(x_j),$$ and for a non-redundant linear combination $\sum_j \varphi_j X^{\alpha_j} $ of standard monomials $X^{\alpha_j}$ with linear coefficients $\varphi_j$ we set $$\omega(\sum_j \varphi_j X^{\alpha_j}) := \min_j \{ \omega(X^{\alpha_j}) \}.$$ In particular, we may consider special cases:

\begin{example} Suppose $\Ln$ is a nilpotent ideal of $\Lm$, for example $\Lm$ itself or $\Lh$. %We denote the weight on $\CU(\Lm)$ obtained from the $(\Lm,\Ln)$-filtration with $\omega_{(\Lm,\Lm)}$. Similarly, 
We then let $\omega_{(\Lm,\Ln)}$ be the weight on $\CU(\Lm)$ obtained from the $(\Lm,\Ln)$-filtration. We let $\lambda$ be the weight on $\CU(\Lm)$ obtained from the trivial, positive filtration $\Lm := \Lm(0) := \Lm(1)$ and $\Lm(i) := \{ 0 \}$ for $i \geq 2$. \end{example}

This $\lambda$ can be considered a length function (on the standard monomials).

\begin{lemma} If $\Ln$ is a nilpotent ideal of $\Lp \ltimes \Lm$ contained in $\Lm$, then $\omega_{(\Lm,\Ln)}$ is a weight on $\CU(\Lm)$ that is compatible with the action of $\Lp \ltimes \Lm$. \end{lemma}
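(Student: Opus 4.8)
The plan is to verify the four defining conditions of a weight compatible with the action, in order. Conditions (1) and (2) — namely $\omega_{(\Lm,\Ln)}(X) = +\infty \iff X = 0$ and $\omega_{(\Lm,\Ln)}(X+Y) \geq \min\{\omega_{(\Lm,\Ln)}(X),\omega_{(\Lm,\Ln)}(Y)\}$ — are essentially formal consequences of the definition of $\omega_{(\Lm,\Ln)}$ on non-redundant linear combinations of standard PBW monomials as the minimum of the weights of the constituent monomials, together with the fact that each $x_j$ has $\omega_{(\Lm,\Ln)}(x_j) < \infty$ (since $\Ln$ is nilpotent, the $(\Lm,\Ln)$-filtration terminates, so every basis vector lies outside some term of the filtration). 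For (2) one observes that when one adds two non-redundant combinations, cancellation only removes monomials and hence cannot decrease the minimum; the bookkeeping is routine.

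The substantive conditions are (3), $\omega_{(\Lm,\Ln)}(x \ast X) \geq \omega_{(\Lm,\Ln)}(x) + \omega_{(\Lm,\Ln)}(X)$ for $x \in \Lm$, and (4), $\omega_{(\Lm,\Ln)}(\delta \ast X) \geq \omega_{(\Lm,\Ln)}(X)$ for $\delta \in \Lp$. For both it suffices to treat $X = X^\alpha$ a standard monomial, because $\omega_{(\Lm,\Ln)}$ of a general element is the minimum over its monomials and the actions are linear. The key tool is that the defining property of the filtration, $[\Lm(i),\Lm(j)] \subseteq \Lm(i+j)$, propagates to products in $\CU(\Lm)$: if $y \in \Lm(i)$ then left multiplication $\CU(\Lm) \to \CU(\Lm)$, $X \mapsto yX$, raises $\omega_{(\Lm,\Ln)}$ by at least $i$. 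To make this precise I would first prove the auxiliary claim that $\omega_{(\Lm,\Ln)}(Y \cdot Z) \geq \omega_{(\Lm,\Ln)}(Y) + \omega_{(\Lm,\Ln)}(Z)$ for arbitrary $Y,Z \in \CU(\Lm)$; granting this, (3) is immediate since $x \ast X = x \cdot X$.

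For the auxiliary claim one argues by straightening: a product $x_{i_1} \cdots x_{i_s}$ of basis vectors (not necessarily in order) can be rewritten as a linear combination of standard monomials using the relations $x_b x_a = x_a x_b + [x_b,x_a]$, and each such swap replaces a pair $x_b x_a$ by $x_a x_b$ (same total $\omega_{(\Lm,\Ln)}$-weight) plus the term $[x_b,x_a]$, which by $[\Lm(i),\Lm(j)]\subseteq\Lm(i+j)$ has $\omega_{(\Lm,\Ln)}$-weight at least $\omega_{(\Lm,\Ln)}(x_b)+\omega_{(\Lm,\Ln)}(x_a)$, while being shorter; an induction on length (with weight as a secondary invariant) shows that reordering never decreases $\omega_{(\Lm,\Ln)}$, whence the weight of any monomial in arbitrary order, and then of any product, is bounded below by the sum of the weights of its factors. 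For condition (4), I write $\delta \ast X^\alpha = \sum_{j} x_{i_1}\cdots[\delta,x_{i_j}]\cdots x_{i_t}$ using the derivation formula; here $[\delta,x_{i_j}] \in \Lm$ because $\Lm$ is an ideal, and — this is the one place the hypothesis that $\Ln$ is an \emph{ideal of $\Lp \ltimes \Lm$} rather than merely of $\Lm$ is used — $[\delta,\Lm(i)] \subseteq \Lm(i)$ since each term $\Lm(i)$ of the $(\Lm,\Ln)$-filtration is an ideal of $\Lp\ltimes\Lm$ (it is built from $\Ln$ by taking iterated brackets inside $\Ln$, and $\Ln \trianglelefteq \Lp\ltimes\Lm$, together with $\Lm \trianglelefteq \Lp \ltimes \Lm$ for the $\Lm(0)=\Lm$ term). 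Therefore $\omega_{(\Lm,\Ln)}([\delta,x_{i_j}]) \geq \omega_{(\Lm,\Ln)}(x_{i_j})$, and combining with the product estimate from (3) gives $\omega_{(\Lm,\Ln)}$ of each summand $\geq \sum_\ell \omega_{(\Lm,\Ln)}(x_{i_\ell}) = \omega_{(\Lm,\Ln)}(X^\alpha)$, so by (2) the minimum over $j$ satisfies the same bound.

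The main obstacle is the straightening argument underlying the auxiliary multiplicativity claim: one must set up the induction so that the "error terms" $[x_b,x_a]$, though of higher filtration weight, are controlled simultaneously with the reordering, and one must be careful that $\omega_{(\Lm,\Ln)}$ is defined via the \emph{non-redundant} expansion, so potential cancellations among straightened terms have to be handled (they only help, by condition (2), but this must be said). Once multiplicativity of $\omega_{(\Lm,\Ln)}$ under products is established, conditions (3) and (4) follow cleanly, and (1)–(2) are bookkeeping.
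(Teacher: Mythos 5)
Your proof is correct and follows essentially the same route as the paper: conditions (1)--(2) from the definition and nilpotency of $\Ln$, condition (3) by straightening out-of-order products via $x_b x_a = x_a x_b + [x_b,x_a]$ together with $[\Lm(i),\Lm(j)]\subseteq\Lm(i+j)$ and induction on monomial length, and condition (4) from the product estimate plus the fact that $\delta$ stabilises the $(\Lm,\Ln)$-filtration because $\Ln$ (hence each term of its lower central series) is an ideal of $\Lp\ltimes\Lm$. Your auxiliary submultiplicativity claim $\omega(YZ)\geq\omega(Y)+\omega(Z)$ is just a slightly more explicit packaging of the induction the paper performs directly.
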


\begin{proof} Property $(1)$ holds since $\Ln$ is nilpotent (and only if the ideal is nilpotent). Since $\CU^k(\Lm,\omega_{(\Lm,\Ln)})$ is spanned by the standard monomials of degree at least $k$, we also have the second property. If $x_{i}$ and $x_j$ are basis vectors with $j < i$, then we have $\omega(x_i \cdot x_j) = \omega(x_j \cdot x_i + [x_i,x_j]) \geq \min(\omega(x_j \cdot x_i),\omega([x_i,x_j])) \geq \omega(x_i) + \omega(x_j)$, by using the definition of $\omega$ on standard monomials and the property of the grading on $\Lm$. By using induction on the length of a standard monomials (and $(2)$), we then obtain property $(3)$. Property $(4)$ follows from $(3)$ and the fact that $\delta$ stabilises the flag (since $\Ln$ is an ideal of $\Lp \ltimes \Lm$). \end{proof}

We note that if a weight $\omega$ is compatible with the action of $\Lp \ltimes \Lm$, then each such subspace $\CU^k(\Lm,\omega)$ is a $(\Lp \ltimes \Lm)$-submodule of $\CU(\Lm)$. In particular: we conclude that for each $k_1,k_2$ in $\N$, $$\CS(\Lm,\omega_{(\Lm,\Lm)},k_1,\omega_{(\Lm,\Lh)},k_2) := \CU^{k_1}(\Lm,\omega_{(\Lm,\Lm)}) + \CU^{k_2}(\Lm,\omega_{(\Lm,\Lh)})$$ is a $(\Lp \ltimes \Lm)$-stable subspace of $\CU(\Lm)$ and we may consider the quotient module.

\subsection{Properties of the quotient module}

We now need to determine two things: the dimension of the quotient and for which choices the quotient will be faithful.

\begin{proposition}[Faithful quotient] \label{PropFaithful} Suppose $\Lp$ acts faithfully on $\Lm$. If $k_1 > c(\Lm)$ and $k_2 > c(\Lh)$, then the quotient of $\CU(\Lm) $ by $ \CS(\Lm,\omega_{(\Lm,\Lm)},k_1,\omega_{(\Lm,\Lh)},k_2)$ is a faithful $(\Lp \ltimes \Lm)$-module. \end{proposition}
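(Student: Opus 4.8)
The plan is to show that an element $X \in \CU(\Lm)$ whose image in the quotient $\CM := \CU(\Lm)/\CS$ generates a submodule killed by the $(\Lp \ltimes \Lm)$-action must already lie in $\CS$. Concretely, I would argue that the annihilator of $\CM$ (as a $(\Lp \ltimes \Lm)$-module) is trivial by a two-step reduction: first exploit faithfulness of the $\Lm$-action (left multiplication) to control the nilradical part, then exploit faithfulness of $\Lp$ on $\Lm$ to handle the reductive part. So suppose $z = p + x \in \Lp \ltimes \Lm$ acts as zero on $\CM$, i.e. $z \ast X \in \CS$ for every $X \in \CU(\Lm)$.

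First I would test against $X = 1$, the unit of $\CU(\Lm)$. Since $p \ast 1 = 0$ and $x \ast 1 = x$, we get $x \in \CS$. Now $\CS = \CU^{k_1}(\Lm,\omega_{(\Lm,\Lm)}) + \CU^{k_2}(\Lm,\omega_{(\Lm,\Lh)})$, and every element of $\Lm$ has length $\lambda$ equal to $1$ (it is a linear combination of the $x_i$). The key point is that $\CU^{k_1}(\Lm,\omega_{(\Lm,\Lm)})$ is spanned by standard monomials of $\omega_{(\Lm,\Lm)}$-weight at least $k_1 > c(\Lm)$, and such monomials all have length $\geq 1$; more importantly I need that no nonzero element of $\Lm$ (length-one element) can lie in $\CS$. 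Using the weakly-adapted basis $\{x_1,\dots,x_d\}$: order the $x_i$ so that those with large $\omega_{(\Lm,\Lm)}$-value or large $\omega_{(\Lm,\Lh)}$-value can be identified, and observe that a degree-one monomial $x_i$ lies in $\CU^{k_1}(\Lm,\omega_{(\Lm,\Lm)})$ only if $\omega_{(\Lm,\Lm)}(x_i) \geq k_1 > c(\Lm)$, which is impossible since $\Lm$ has class $c(\Lm)$ so $\Lm(c(\Lm)+1) = \{0\}$; similarly for $\Lh$. Hence the only length-one element of $\CS$ is $0$, so $x = 0$ and $z = p \in \Lp$.

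Next, with $z = p \in \Lp$, I would use that $\Lp$ acts by derivations: for $x_i \in \Lm \subseteq \CU(\Lm)$ we have $p \ast x_i = [p, x_i] \in \Lm$. By hypothesis $p \ast x_i \in \CS$, and by the length argument just made this forces $[p,x_i] = 0$ for every basis vector $x_i$, hence $[p, \Lm] = 0$, i.e. $p$ acts trivially on $\Lm$. Since $\Lp$ acts \emph{faithfully} on $\Lm$, we conclude $p = 0$. Therefore $z = 0$ and the $(\Lp \ltimes \Lm)$-module $\CM$ is faithful, which is the claim — note that the conditions $k_1 > c(\Lm)$ and $k_2 > c(\Lh)$ are exactly what guarantees $\CS \cap \Lm = \{0\}$, since otherwise a filtration step could cut into degree-one elements.

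The routine points I am glossing are: (a) that $\CS$, being a sum of the submodules $\CU^{k_i}(\Lm,\cdot)$, really is spanned by standard monomials and so "contains a length-one element" can be checked on the basis — this uses that each $\omega$ is constant on standard monomials and that $\CU^k(\Lm,\omega)$ is the span of monomials of $\omega$-weight $\geq k$; and (b) that $\CM$ is nonzero (otherwise faithfulness is vacuous/false), which also follows since $1 \notin \CS$ for the same degree reason. The main obstacle, and the one I would write out carefully, is step (a): making precise that $\CS \cap \Lm = \{0\}$ when $k_1 > c(\Lm)$ and $k_2 > c(\Lh)$, because this is where the hypotheses on $k_1, k_2$ and the compatibility of the chosen basis with \emph{both} filtrations genuinely enter; everything after that is a short derivation-plus-faithfulness argument.
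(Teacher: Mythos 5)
Your proof is correct and follows essentially the same route as the paper: test the action on the unit $1$ to reduce to $\CS \cap \Lm = \{0\}$, then use the derivation action and the faithfulness of $\Lp$ on $\Lm$. The paper packages your step (a) by noting $\CS \subseteq \CU^2(\Lm,\lambda)$, the span of standard monomials of length at least two, which is exactly your observation that the hypotheses $k_1 > c(\Lm)$, $k_2 > c(\Lh)$ exclude length-one monomials from $\CS$.
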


\begin{proof} Note that the submodule $\CS$ is contained in the subspace $\Lambda_{\geq 2} := \CU^2(\Lm,\lambda)$ of $\CU(\Lm)$ that is spanned by all standard monomials of length at least two. Now suppose that $(\delta,x) \in \Lp \ltimes \Lm$ maps $\CU(\Lm)$ into $\CS$. Then $x = (\delta,x) \ast 1 = \delta(1) + x \ast 1 = 0 + x = x \in \CS \subseteq \Lambda_{\geq 2}$. Note that $\Lm$ is the vector space spanned by all the standard monomials of length one. %Since $x$ is in $\Lm$, it is in the subspace $\Lambda_{= 1}$ spanned by the standard monomials of length one, 
We conclude that $x \in \Lambda_{\geq 2} \cap \Lm = \{ 0 \}$. Similarly, $\Lp$ maps $\Lm$ into $\Lambda_{\geq 2} \cap \Lm = \{0\}$. Since $\Lp$ is assumed to act faithfully on $\Lm$, we have $(\delta,x) = (0,0)$. \end{proof}

We recall the following well-known result about Sylvester denumerants.

\begin{lemma} Consider a finite multiset $M = \{ m_1,\ldots,m_p\}$ of positive integers and $t \in \N$. Then the number $\Delta(t;M)$ of $M$-partitions of $t$ is bounded from above by $\binom{p + t - 1}{t -1}.$ \end{lemma}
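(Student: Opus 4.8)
The plan is to read an $M$-partition of $t$ as a tuple $(c_1, \ldots, c_p) \in \N^p$ with $\sum_{i=1}^p c_i m_i = t$, so that $\Delta(t; M)$ is the classical Sylvester denumerant, and then to control it by comparison with the extremal ``all parts equal to $1$'' configuration, in which the denumerant is exactly a binomial coefficient. Concretely, I would first encode the denumerant through its generating function $\Delta(t;M) = [x^t] \prod_{i=1}^p (1 - x^{m_i})^{-1}$, since the factor $(1-x^{m_i})^{-1} = \sum_{c \geq 0} x^{c m_i}$ records exactly the admissible contributions of the $i$-th part.

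The crucial use of the hypothesis that each $m_i$ is a \emph{positive} integer enters next. Coefficientwise one has $(1 - x^{m_i})^{-1} \preceq (1-x)^{-1}$, because the left-hand series has coefficient $1$ precisely at the multiples of $m_i$ and $0$ elsewhere, whereas every coefficient on the right equals $1$; the inequality $m_i \geq 1$ is exactly what makes this hold. As all the series involved have non-negative coefficients, these dominations multiply, giving $\prod_{i=1}^p (1-x^{m_i})^{-1} \preceq (1-x)^{-p}$, and comparing the coefficient of $x^t$ bounds $\Delta(t;M)$ by the denumerant of the all-ones multiset. The same estimate can be obtained by a purely elementary injection: $m_i \geq 1$ forces $\sum_i c_i \leq \sum_i c_i m_i = t$, so the identity map embeds the set of $M$-partitions into the set of $\N$-tuples of bounded length, which a stars-and-bars argument enumerates by a single binomial coefficient.

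The step I expect to be the main obstacle is the final reconciliation of this count with the binomial coefficient $\binom{p+t-1}{t-1}$ in the precise form asserted. The extremal count produced by the domination is naturally indexed by the number of parts, and one must track with care which of the two complementary indices appears, since the candidate values coincide only in borderline configurations of $p$ and $t$. I would therefore isolate the extremal case in which all $m_i = 1$, where the denumerant can be evaluated exactly, and then check that the monotonicity built into the coefficientwise domination never allows $\Delta(t;M)$ to surpass the stated bound. The delicate part of the argument is thus the bookkeeping of the binomial index rather than any analytic input, and it is here that the proof must be written out most carefully.
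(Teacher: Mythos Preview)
Your approach is the standard one and is correct: the generating-function domination (or, equivalently, the injection coming from $\sum_i c_i \leq \sum_i c_i m_i = t$ together with stars-and-bars) yields
\[
\Delta(t;M) \;\leq\; [x^t]\,(1-x)^{-p} \;=\; \binom{p+t-1}{p-1} \;=\; \binom{p+t-1}{t}.
\]
The paper itself does not give a proof of this lemma; it is merely stated as a well-known fact about Sylvester denumerants, so there is no argument in the paper to compare yours against.

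Your instinct about the ``reconciliation'' step is exactly right, and in fact the obstacle you anticipate is not a gap in your reasoning but a typo in the statement. The bound $\binom{p+t-1}{t-1}$ as printed is simply false: already for $p=2$, $m_1=m_2=1$, $t=1$ one has $\Delta(1;M)=2$ while $\binom{2}{0}=1$. The intended (and correct) bound is $\binom{p+t-1}{p-1}$, which is precisely what your argument produces. This is also what the paper actually uses: the sentence immediately following the lemma sums over $0\leq t\leq T$ to obtain $\binom{p+T}{T}$, and that identity is the hockey-stick sum $\sum_{t=0}^{T}\binom{p+t-1}{p-1}=\binom{p+T}{p}=\binom{p+T}{T}$, not a sum of the misprinted binomials. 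So you should not try to force your count to match $\binom{p+t-1}{t-1}$; simply record the correct bound $\binom{p+t-1}{p-1}$ and note that the subsequent ``In particular'' and Proposition~\ref{PropBound} go through unchanged.
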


In particular: the number of $M$-partitions of $0 \leq t \leq T$ is at most $\binom{p + T}{T}$. The proposition now suggests the choice $k_1 := c(\Lm) + 1$ and $k_2 := c(\Lh) + 1$. We then get:

\begin{proposition}[Upper bound] \label{PropBound} The $(\Lp\ltimes\Lm)$-module $$\CQ := \CU(\Lm) / \CS(\Lm,\omega_{(\Lm,\Lm)},c(\Lm)+1,\omega_{(\Lm,\Lh)},c(\Lh)+1)$$ has dimension at most $$\binom{\dim(\Lm) + \dim(\Lm / \Lh)}{\dim(\Lm / \Lh)} \cdot \binom{\dim(\Lm) + c(\Lh)}{c(\Lh)}.$$
\end{proposition}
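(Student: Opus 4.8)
The plan is to count a spanning set for the quotient $\CQ$ explicitly, using the PBW basis of $\CU(\Lm)$ and the fact that $\CS$ is spanned by certain standard monomials. First I would set $d := \dim(\Lm)$, $h := \dim(\Lh)$ and fix the weakly adapted basis $\{x_1,\ldots,x_d\}$ of $\Lm$ from Section 3.2, ordered so that $\{x_1,\ldots,x_h\}$ spans $\Lh$. The key observation is that, with $k_1 := c(\Lm)+1$ and $k_2 := c(\Lh)+1$, the submodule $\CS$ is spanned by the set of standard monomials $X^\alpha = x_1^{\alpha_1}\cdots x_d^{\alpha_d}$ with either $\omega_{(\Lm,\Lm)}(X^\alpha) \geq k_1$ or $\omega_{(\Lm,\Lh)}(X^\alpha) \geq k_2$: indeed each $\CU^{k}(\Lm,\omega)$ is spanned by the standard monomials it contains (this is how $\omega$ was defined on $\CU(\Lm)$, via the minimum over the monomials appearing), and a sum of two monomial-spanned subspaces is again spanned by the union of those monomials. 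Hence a spanning set for $\CQ$ is given by the images of the standard monomials $X^\alpha$ with $\omega_{(\Lm,\Lm)}(X^\alpha) < k_1$ \emph{and} $\omega_{(\Lm,\Lh)}(X^\alpha) < k_2$; bounding the number of such $\alpha$ bounds $\dim(\CQ)$.

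Next I would translate the two weight conditions into conditions on the exponent vector $\alpha \in \N^d$. Writing $a_j := \omega_{(\Lm,\Lm)}(x_j)$ and $b_j := \omega_{(\Lm,\Lh)}(x_j)$, we have $\omega_{(\Lm,\Lm)}(X^\alpha) = \sum_j \alpha_j a_j$ and $\omega_{(\Lm,\Lh)}(X^\alpha) = \sum_j \alpha_j b_j$. Because the basis is weakly adapted to the $(\Lm,\Lh)$-filtration, the $x_j$ lying outside $\Lh$ have $b_j = 1$ (they lie in $\Lm = \Lm(0) = \Lm(1)$ but not in $\Lh = \Lm(2)$), and there are exactly $d - h = \dim(\Lm/\Lh)$ of them; the remaining $h$ basis vectors have $b_j \geq 2$ and, similarly, all basis vectors have $a_j \geq 1$. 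So the condition $\omega_{(\Lm,\Lh)}(X^\alpha) < k_2 = c(\Lh)+1$, i.e. $\sum_j \alpha_j b_j \leq c(\Lh)$, forces $\sum_{x_j \in \Lh} \alpha_j \leq c(\Lh)$ (a multiset-partition-type constraint on $h$ variables with weights $b_j$), while the condition $\omega_{(\Lm,\Lm)}(X^\alpha) < k_1 = c(\Lm)+1$ forces $\sum_j \alpha_j \leq c(\Lm)$, and in particular $\sum_{x_j \notin \Lh} \alpha_j \leq c(\Lm)$, another multiset-partition constraint on $d-h = \dim(\Lm/\Lh)$ variables.

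I would then count: the number of choices for the sub-vector $(\alpha_j)_{x_j \notin \Lh}$ is at most the number of $M$-partitions of integers $t$ with $0 \le t \le c(\Lm)$, where $M$ is the multiset of weights $a_j$ for $x_j\notin\Lh$ — and here I would actually just use the coarser bound from the Sylvester-denumerant lemma for the multiset $\{1,\ldots,1\}$ of size $\dim(\Lm/\Lh)$ with bound $c(\Lm) \le \dim(\Lm)$, giving at most $\binom{\dim(\Lm)+\dim(\Lm/\Lh)}{\dim(\Lm/\Lh)}$ choices. Symmetrically, the number of choices for $(\alpha_j)_{x_j\in\Lh}$ is at most the number of $M'$-partitions of integers $\le c(\Lh)$, with $M'$ the multiset of the $h$ values $b_j$; applying the lemma with $p = h = \dim(\Lh) \le \dim(\Lm)$ and $T = c(\Lh)$ gives at most $\binom{\dim(\Lm)+c(\Lh)}{c(\Lh)}$ choices. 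Multiplying the two counts yields the stated bound, since every admissible $\alpha$ is determined by its two sub-vectors. The main obstacle — really the only delicate point — is making sure the decoupling is legitimate: one must check that $\CS$ is genuinely monomial-spanned (so that the spanning set for $\CQ$ is exactly the complementary set of monomials, with no cross terms lost) and that the inequality $\sum_j\alpha_j b_j \le c(\Lh)$ does bound $\sum_{x_j\in\Lh}\alpha_j$ — which needs the weak adaptedness to guarantee $b_j \ge 1$ for all $j$ and $b_j\ge 2$ precisely for $x_j\in\Lh$, with the complementary basis vectors contributing nonnegatively. Everything else is the bookkeeping of binomial coefficients via the cited lemma.
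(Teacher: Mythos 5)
Your proposal is correct and takes essentially the same route as the paper: since $\CS$ is spanned by standard monomials, $\CQ$ is spanned by the images of the monomials whose $(\Lm,\Lm)$-weight is at most $c(\Lm)$ and whose $(\Lm,\Lh)$-weight is at most $c(\Lh)$, and the two Sylvester-denumerant counts (over the basis vectors outside $\Lh$ and inside $\Lh$, respectively) multiply to give the stated bound after using $c(\Lm)\le\dim(\Lm)$ and $\dim(\Lh)\le\dim(\Lm)$. One harmless slip: in the paper's $(\Lm,\Lh)$-filtration one has $\Lm(1)=\Lh$, so basis vectors outside $\Lh$ have weight $0$ (not $1$) and those in $\Lh$ have weight $\ge 1$ (not $\ge 2$); your argument only uses that the inside weights are positive and the outside ones nonnegative, so the bound is unaffected.
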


\begin{proof} Since $\CS$ is spanned by all standard monomials $X$ satisfying $\omega_{(\Lm,\Lm)}(X) \geq c(\Lm) + 1$ or $\omega_{(\Lm,\Lh)}(X) \geq c(\Lh) + 1$, the dimension of $\CQ$ is bounded from above by the number of standard monomials $Y$ satisfying $\omega_{(\Lm,\Lm)}(Y) \leq c(\Lm)$ and $\omega_{(\Lm,\Lh)}(Y) \leq c(\Lh)$. The lemma above then gives the crude upper bound $$\dim(\CQ) \leq \binom{\dim(\Lm / \Lh) + c(\Lm)}{c(\Lm)} \cdot \binom{\dim(\Lh) + c(\Lh)}{c(\Lh)}$$ and the identity $\binom{a+b}{b} = \binom{a+b}{a}$ finishes the proof. \end{proof}

%\begin{remark} This bound can be refined if the weights are given explicitly. Let $\CB_{\Lm}$ be the basis for $\Lm$ that is compatible with the filtrations $(\Lm,\Lm)$ and $(\Lm,\Lh)$. Set $\CB_{\Lh} := \CB_{\Lm} \cap \Lh$ and $\CB_{\Lm / \Lh} := \CB_{\Lm} \setminus \CB_{\Lh}$. Let $\Omega_{\Lh}$ (resp. $\Omega_{\Lm \setminus \Lh}$) be the multiset of weights of the elements of $\CB_{\Lh}$ (resp. $\CB_{\Lm / \Lh}$). Let $\Delta(a,\Omega)$ be the Sylvester denumerant of $a$ with \end{remark}

We note that if a weight $\omega$ is given explicitly, it makes sense to compute the corresponding Sylvester denumerant directly.

\begin{example} If $\Lf$ is filiform, then we can find a decomposition $\Lp \ltimes \Lm$ of $\Lf$ (cf. \cite{Vergne} and \cite{BurdeMoensQuotient}) such that the number of standard monomials $X$ of $\CU(\Lm)$ satisfying $\omega_{(\Lm,\Lm)}(X) = t$ is given by the usual partition function $$p(t) \sim \frac{e^{\pi \sqrt{\frac{2 t}{3}}}}{4 t \sqrt{3}} .$$ \end{example}

\section{Proof of the main results}

Recall that $\mu(\Lg)$ is the minimal degree of a faithful representation of $\Lg$ and that we wish to prove the inequality $$\mu(\Lg) \leq d - n + \binom{r + \varepsilon_1}{\varepsilon_1} \cdot \binom{r + \varepsilon_2}{\varepsilon_2}.$$

\begin{proof}(Theorem \ref{TheoremMain}) Let $\map{\iota}{\Lg}{\Lp \ltimes \Lm}$ be the embedding of proposition \ref{PropEmbedding}. %The monotonicity of the $\mu$-invariant yields $\mu(\Lg) \leq \mu(\Lp \ltimes \Lm)$, \cite{BurdeMoensQuotient}. 
We may decompose $\Lp \ltimes \Lm$ as in remark \ref{Remark}: $\Lp_0 \oplus (\Lp / \Lp_0 \ltimes \Lm)$. Since the $\mu$-invariant is monotone and sub-additive (cf. \cite{BurdeMoensQuotient}), we obtain the upper bound $$\mu(\Lg) \leq \mu(\Lp \ltimes \Lm) \leq \mu(\Lp_0) + \mu(\Lp / \Lp_0 \ltimes \Lm).$$

Since $\Lp$ acts reductively on itself, it is itself reductive and its reductive ideal $\Lp_0$ satisfies $\mu(\Lp_0) \leq \dim(\Lp_0) \leq \dim(\Lp)$, \cite{BurdeMoensQuotient}. Proposition \ref{PropEmbedding} gives $\dim(\Lp) \leq \dim(\Lg / \Ln)$, so that $\mu(\Lp_0) \leq d - n$. \newline

Proposition \ref{PropEmbedding} guarantees that $\iota(\Lh)$ is an ideal of $\Lp / \Lp_0 \ltimes \Lm$ of codimension $\dim(\Lr / \Lh)$ in $\Lm$. The proposition also gives $\dim(\Lm) = \dim(\Lr)$. Since $\Lp / \Lp_0$ acts faithfully on $\Lm$, we may apply propositions \ref{PropFaithful} and \ref{PropBound} to conclude that $\mu(\Lp / \Lp_0 \ltimes \Lm) \leq \binom{r + \varepsilon_1}{\varepsilon_1} \cdot \binom{r + \varepsilon_2}{\varepsilon_2}$. This finishes the proof. \end{proof}

Note that we obtain an upper bound for $\mu(\Lg)$ that is a polynomial in $d$ of degree $\varepsilon_1 + \varepsilon_2$.

%\begin{proof}(Corollary $1$) It suffices to observe that for natural $\varepsilon_1,\varepsilon_2$ and $\varepsilon$ satisfying $\varepsilon_1 + \varepsilon_2 = \varepsilon$, we have the inequality $\lfloor \frac{\adef}{2} \rfloor ! \cdot \lceil \frac{\adef}{2} \rceil! \leq \varepsilon_1 ! \cdot \varepsilon_2!$. \end{proof}

\begin{proof}(Corollary \ref{CorPolynomial}) It suffices to make the following two observations. For natural $\varepsilon_1$ and $\varepsilon_2$ we have the inequality $\lfloor \frac{\varepsilon_1 + \varepsilon_2}{2} \rfloor ! \cdot \lceil \frac{\varepsilon_1 + \varepsilon_2}{2} \rceil! \leq \varepsilon_1 ! \cdot \varepsilon_2!$. Similarly: $(d + \varepsilon_1)! / d! \cdot (d + \varepsilon_2)! / d! \leq (d + \varepsilon_1 + \varepsilon_2)! / d!$.\end{proof}

\section{Application: representations of graded Lie algebras}

A well-known theorem by Jacobson, on weakly closed sets of nilpotent operators, states that a Lie algebra $\Lg$ is nilpotent if it admits a regular derivation, \cite{Jacobson}. (For Lie algebras admitting such a transformation, it is known that $\mu(\Lg) = O(\dim(\Lg))$.) The theorem was later generalized and refined in many different ways, see for example \cite{KostrikinKuznetsov}, \cite{BurdeMoensPeriodic}, and \cite{KhukhroMakarenkoShumyatsky}. %Khukhro, Makarenko and Shumyatsky showed in \cite{KhukhroMakarenkoShumyatsky} that the existence of almost-regular derivations implies the almost-nilpotency of the Lie algebra. %\newline%To be precise: they proved that there exist functions $\map{f}{\N \times \N}{\N}$ and $\map{g}{\N}{\N}$ for which the following is true. \newline

\begin{theorem}[Khukhro-Makarenko-Shumyatsky -- \cite{KhukhroMakarenkoShumyatsky}] There exist functions $\map{f}{\N \times \N}{\N}$ and $\map{g}{\N}{\N}$ for which the following is true. Consider Lie algebra $\Lg = \oplus_{u \in A} \Lg_u$ graded by a finitely-generated, torsion-free group $(A,+)$. Let $\sigma$ be the cardinality of the support and let $\delta$ be the dimension of the trivial component $\Lg_0$. Then $\Lg$ admits a nilpotent ideal $\Li$ satisfying $\dim(\Lg / \Li) \leq f(\sigma)$ and $c(\Li) \leq g(\sigma,\delta)$. \end{theorem}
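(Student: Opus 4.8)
The plan is to reduce the statement, in a few standard steps, to the well-understood case of a $\Z$-grading with trivial identity component, where the Higman--Kreknin--Kostrikin theory applies, and then to recover the general case from that one by the \emph{method of generalized centralizers} of Khukhro and Makarenko. (In the notation $f(\sigma)$, $g(\sigma,\delta)$ that is actually used, $f$ is a function of one variable and $g$ of two.)

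\emph{Reductions.} Since $A$ is finitely generated, torsion-free and abelian, it is isomorphic to $\Z^m$, and after replacing $A$ by the subgroup generated by the (finite) support we may assume $m \leq \sigma$. I would then pick a group homomorphism $\map{\phi}{A}{\Z}$ that is injective on the support; such a $\phi$ exists because the support is a finite subset of a torsion-free abelian group, and one may choose it so that no nonzero homogeneous component is absorbed into the degree-$0$ component. Regrading $\Lg$ by the fibres of $\phi$ yields a $\Z$-grading $\Lg = \bigoplus_{i \in \Z} \Lg_i$ with exactly $\sigma$ nonzero components and with $\dim(\Lg_0) = \delta$ unchanged; an ideal of the $\Z$-graded algebra is still an ideal of $\Lg$. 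From now on the grading group is $\Z$, the support is a finite set $S \subseteq \Z$ of size $\sigma$, and the only structural input I keep is that $[\Lg_a,\Lg_b] = 0$ whenever $a,b \in S$ but $a+b \notin S$.

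\emph{The two halves.} First I would treat the case $\Lg_0 = \{0\}$. Here the Kreknin and Kreknin--Kostrikin theorems, in the form valid for $\Z$-gradings with finitely many components --- pass to a cyclic grading when the support is bounded, or argue directly with the combinatorial lemma on sequences of support elements whose initial partial sums never leave $S$ --- bound the nilpotency class of $\Lg$ by a function of $\sigma$ alone; this is the heart of the matter and supplies $g(\sigma,0)$. Then I would remove the hypothesis $\Lg_0 = \{0\}$: now $\Lg_0$ is an arbitrary $\delta$-dimensional subalgebra acting on $\bigoplus_{i \neq 0} \Lg_i$, and the goal is to show that $\Lg$ still contains a nilpotent, graded ideal $\Li$ whose codimension is bounded by a function $f(\sigma)$ not involving $\delta$ --- when $\dim(\Lg) \leq f(\sigma)$ one simply takes $\Li = \{0\}$. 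The class of $\Li$ is then estimated from the grading $\Li$ inherits, whose identity component has dimension at most $\delta$, together with the first half, which produces $g(\sigma,\delta)$.

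\emph{Extracting the large ideal, and the main obstacle.} The construction of $\Li$ is the technical core and where I expect the real work. Following the Khukhro--Makarenko pattern, I would begin with the ideal generated by $\bigoplus_{i \neq 0} \Lg_i$, which already has codimension at most $\delta$, and then successively discard the part of the obstruction coming from $\Lg_0$ by passing to generalized, graded centralizers --- centralizers of carefully chosen tuples of homogeneous elements, iterated boundedly many times --- whose dimensions stay under control precisely because the grading has only $\sigma$ components and because each such centralizer lies inside a module whose structure is pinned down by the first half. The delicate point, and the crux of \cite{KhukhroMakarenkoShumyatsky}, is to make the codimension of $\Li$ bounded by $f(\sigma)$ \emph{uniformly in $\delta$ and in $\dim(\Lg)$}: the naive choice of the ideal generated by the nonzero-degree components only gives codimension $\leq \delta$, which is useless for the intended bound on $\mu(\Lg)$, and squeezing it down to $f(\sigma)$ is exactly what the interplay between the Kreknin--Kostrikin identities and the generalized-centralizer bookkeeping achieves.
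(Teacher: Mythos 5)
The paper itself offers no proof of this statement: it is imported verbatim (with attribution) from \cite{KhukhroMakarenkoShumyatsky} and only used to deduce Corollary \ref{CorGrading}, so what you are really sketching is the Khukhro--Makarenko--Shumyatsky argument. Your first half does follow their route: replacing $(A,+)$ by $\Z$ via a homomorphism injective on the (finite) support is the standard reduction, and the constant-free case $\Lg_0=\{0\}$ is indeed the Kreknin--Kostrikin-type combinatorial heart. One caveat there: passing to a cyclic grading $\Z/N\Z$ only yields solvability of bounded derived length (Kreknin); the bounded \emph{nilpotency class} in the constant-free case must come from the torsion-free, no-wrap-around combinatorics of the $\Z$-grading, so the ``or pass to a cyclic grading'' alternative does not deliver what you need.

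The genuine gap is in your second half: you set out to prove that the codimension of $\Li$ is bounded by $f(\sigma)$ \emph{uniformly in} $\delta$, and that statement is false. Take $\Lg=\Lg_0$ simple with the trivial grading: then $\sigma=1$, but the only nilpotent ideal is $\{0\}$, of codimension $\delta=\dim(\Lg)$, which is unbounded. You were misled by a typo in the theorem as printed here: note that $f$ is declared on $\N\times\N$ but applied to $\sigma$ alone, while $g$ is declared on $\N$ but applied to $(\sigma,\delta)$. The actual KMS theorem --- and the form this paper uses in proving Corollary \ref{CorGrading}, where $E(\sigma,\delta):=f(\sigma,\delta)+g(\sigma)$ --- bounds the codimension by a function of \emph{both} $\sigma$ and $\delta$ and the nilpotency class by a function of $\sigma$ alone. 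So the ``delicate point'' you flag (squeezing the codimension down to a $\delta$-independent bound) is not delicate but impossible, and no generalized-centralizer bookkeeping can achieve it; the target should be $\dim(\Lg/\Li)\leq f(\sigma,\delta)$ and $c(\Li)\leq g(\sigma)$. Relatedly, your proposed mechanism for bounding $c(\Li)$ --- applying the constant-free case to $\Li$, whose zero component merely has dimension at most $\delta$ --- does not apply as stated, since that case requires the zero component to vanish, not just to be small; in KMS the class bound comes from Kreknin--Higman-type identities depending on $\sigma$ only.
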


%\textbf{Theorem.} \emph{There exist functions $\map{f}{\N \times \N}{\N}$ and $\map{g}{\N}{\N}$ for which the following is true. Consider Lie algebra $\Lg = \oplus_{u \in A} \Lg_u$ graded by a finitely-generated, torsion-free group $(A,+)$. Let $\sigma$ be the cardinality of the support and let $\delta$ be the dimension of the trivial component $\Lg_0$. Then $\Lg$ admits a nilpotent ideal $\Li$ satisfying $\dim(\Lg / \Li) \leq f(\sigma)$ and $c(\Li) \leq g(\sigma,\delta)$.} \newline

%\textbf{Theorem.} \emph{Consider a $(\Z^r,+)$-graded Lie algebra $\Lg = \oplus_{u \in \Z^r} \Lg_u$. Let $\sigma$ be the cardinality of the support and let $\delta$ be the dimension of the trivial component $\Lg_0$. Then $\Lg$ admits a nilpotent ideal $\Li$ satisfying $\dim(\Lg / \Li) \leq f(\sigma)$ and $c(\Li) \leq g(\sigma,\delta)$.} \newline

%\textbf{Theorem.} \emph{Let $\Lg$ be a finite-dimensional Lie algebra over an algebraically closed field of characteristic zero. Suppose that $\Lg$ admits a $(\Z^r,+)$-grading $\oplus_{u \in \Z^r} \Lg_i$ with exactly $\sigma$ non-zero components and set $\delta := \dim(\Lg_0)$. Then $\Lg$ has an ideal of codimension at most $f(\sigma,\delta)$ that is nilpotent of class at most $g(\sigma)$.} \newline

\begin{proof}(Corollary \ref{CorGrading}) The nil-defect of $\Lg$ is bounded from above by $\varepsilon(\rad(\Lg),\Li) \leq E(\sigma,\delta) := f(\sigma,\delta) + g(\sigma)$. Note that the functions $f$ and $g$ may grow very quickly as $\sigma$ and $\delta$ increase. \end{proof}

%\begin{corollary} For all Lie algebras $\Lg$ in the theorem above, we have the upper bound $$\mu(\Lg) \leq d + (d + E(x,x_0))^{E(x,x_0)}.$$ \end{corollary}

\section*{Aknowledgements}

The author would like to thank his host E. Zel'manov, N. Wallach, E. Khukhro, and G. Glauberman for helpful disucssions.

%\newpage

%\begin{proof} We may assume both flags are complete. Let $\CB$ be an ordered basis compatible with the first flag and define the height $h(x)$ of a coordinate vector $x = (x_1,\ldots,x_d) \in \C^d \setminus \{ 0 \}$ as $\max\{ i \in \N | x_i \neq 0 \}$ and $h(0) = 0$. We may identify ordered bases of $V$ with $d \times d$ matrices, the columns of which are the coordinate vectors of the basis with respect to $\CB$. Note that if a matrix $C = (C_1,\ldots,C_d)$ describes an ordered basis that is compatible with the second flag, a substitution of the form $C_j \mapsto C_j - \alpha \cdot C_i$, with $\alpha \in \C$ and $i < j$, yields another basis compatible with that flag. Secondly, note that the matrix $C$ describes a basis compatible with the first flag, iff all columns have a distinct, positive height. \newline

%Now let $C$ be a matrix whose columns correspond with a basis that is compatible with the second flag. If there are two columns of the same height, we may apply a substitution to lower the height of one of them. Since the height can assume only non-negative integer values, we obtain, after finitely many substitutions, a basis such that all $h(C_j)$ are distinct. %$\{ h(C_j) | 1 \leq j \leq d \} = \{ 1,2, \ldots, d \}$ holds. 
%This means that the basis is compatible with both flags.
%This finishes the proof. \end{proof}

\end{document}